\DeclareMathOperator*{\colim}{colim}
\providecommand{\eprint}[2][]{\href{http://arxiv.org/abs/#2}{arXiv:#2}}
\newtheorem{theorem}{Theorem}[section]
\newtheorem{lemma}[theorem]{Lemma}
\newtheorem{proposition}[theorem]{Proposition}
\newtheorem{corollary}[theorem]{Corollary}
\theoremstyle{definition}
\newtheorem{definition}[theorem]{Definition}
\newtheorem{example}[theorem]{Example}
\theoremstyle{remark}
\newtheorem{remark}[theorem]{Remark}
\newcommand{\xra}{\xrightarrow}
\newcommand{\xla}{\xleftarrow}
\newcommand{\Hom}{\operatorname{Hom}}
\newcommand{\coCh}{\operatorname{\textbf{coCh}}}
\newcommand{\AR}{\operatorname{\textbf{AR}}}
\newcommand{\Contr}{\operatorname{\textbf{Contr}}}
\newcommand{\Id}{\operatorname{Id}}
\newcommand{\Map}{\operatorname{Map}}
\newcommand{\Z}{\mathbb{Z}}
\newcommand{\N}{\mathbb{N}}
\title{The projective model structure on contractions}
\author{Marco Manetti}
\address{\newline
Universit\`a degli studi di Roma La Sapienza,\hfill\newline
Dipartimento di Matematica \lq\lq Guido
Castelnuovo\rq\rq,\hfill\newline
P.le Aldo Moro 5,
I-00185 Roma, Italy.}
\email{manetti@mat.uniroma1.it}
\urladdr{www.mat.uniroma1.it/people/manetti/}
\author{Chiara Spagnoli}
\email{chiaraspagnoli93@hotmail.it}
\subjclass[2010]{55U15, 55U35}
\keywords{Contractions of cochain complexes, Model categories}
\begin{document}

\begin{abstract} We prove that the projective model structure on the category of unbounded cochain complexes 
extends naturally to the category of contractions. The proof is completely elementary and we do not assume 
familiarity with model categories. 
\end{abstract}

\maketitle


\section*{Introduction}

Let $R$ be a commutative ring, a contraction of cochain complexes of $R$-modules is a diagram
\begin{equation}\label{equ.contraction}
\xymatrix{M\ar@<.4ex>[r]^\imath&N\ar@<.4ex>[l]^\pi\ar@(ul,ur)[]^h}
\end{equation}
where $M,N$ are (unbounded) cochain complexes of $R$-modules,  
$\imath,\pi$ are morphisms of cochain complexes  and $h$ is an $R$-linear map of degree $-1$  such that:
\begin{enumerate}

    \item (deformation retraction) $\;\pi\imath=\Id_{M}$,
    $\;\imath\pi-\operatorname{Id}_{N}=d_{N}h+hd_{N}$;

    \item (side conditions) $\;\pi h=h\imath=hh=0$.
\end{enumerate}
The notion of contraction was introduced by  Eilenberg and Mac Lane \cite{eilmactw} and plays a central role in homological perturbation theory \cite{gug72,HK,LS} and homotopy transfer of infinity structures 
\cite{FMcone,getzler04,HK,perturbation}. A morphism of contractions is defined in the obvious way as a morphism of diagrams, and the category of contractions is denoted by $\mathbf{Contr}(R)$. 
The category $\mathbf{coCh}(R)$ of cochain complex can be interpreted as the full subcategory of $\mathbf{Contr}(R)$
consisting of contractions with $\imath=\pi=\Id$ and $h=0$, and there exists a  
faithful  functor
\[ \Phi\colon \mathbf{Contr}(R)\to \mathbf{coCh}(R),\qquad 
\xymatrix{M\ar@<.4ex>[r]^\imath&N\ar@<.4ex>[l]^\pi\ar@(ul,ur)[]^h}\;\mapsto\; N\,.\]

Since the explicit formulas of homotopy transfer, see e.g. \cite{HK}, commute with morphisms of contractions, 
it is natural to ask for a homotopy theory of contractions, and more specifically whether a given model structure on the category of cochain complexes extends to the category of contractions. In this paper we study this problem for the projective model structure, and we prove that there exists a model structure on $\mathbf{Contr}(R)$, where 
a morphism $f$ is a weak equivalence, fibration, cofibration if and only if $\Phi(f)$ is.  
For simplicity of exposition we restrict to unbounded complexes, although the same ideas can be applied, with minor modification, also to complexes in nonpositive degrees, as well as other model structures on $\mathbf{coCh}(R)$.    
As a byproduct of our proof we also prove that also the category of acyclic retractions carries  a natural model structure, where an acyclic retraction is defined as a diagram 
$\xymatrix{M\ar@<.4ex>[r]^\imath&N\ar@<.4ex>[l]^\pi}$, with both $\imath,\pi$ quasi-isomorphisms of complexes and $\pi\imath=\Id_M$.

The proof that we give is completely elementary and relies essentially on three algebraic results, called  ``basic tricks'' on contractions and acyclic retraction: the first two tricks are well known and widely present in literature, see e.g. \cite{LS}, while the third appears new, at least to our knowledge.

\bigskip
\section{A short review of model categories}
\label{sec.modelcategories}

For the benefit of the reader and for fixing notation, in this section we briefly recall the notion of model category and the definition of the projective model structure in the category of cochain complexes over a commutative unitary ring. The main reference is Hovey's book \cite{Hov99}.

For every category $\mathbf{C}$ we shall write $A\in \mathbf{C}$ if $A$ is an object of $\mathbf{C}$ and we denote by $\Hom_{\mathbf{C}}(A,B)$ the set of morphisms $A\to B$. We denote by  
$\Map(\mathbf{C})$ the category whose objects are morphism in $\mathbf{C}$ and whose morphisms are the  commutative squares. 
The following definition gives the basic terminology involved in the notion of model category.

\begin{definition} In the above notation:
\begin{enumerate}

\item A morphism $f$  is a called a \emph{retract} of a morphism $g$ if  there exists a commutative diagram of the form 
\[ \begin{matrix}\xymatrix{
A \ar[r] \ar@/^1pc/[rr]^{\Id_A} \ar[d]_{f} & C \ar[r] \ar[d]^g & A \ar[d]^f\\
B \ar[r] \ar@/_1pc/[rr]_{\Id_B} & D \ar[r] &B 
}\end{matrix}\;.\]

\item A \emph{functorial factorization} is an ordered pair $(H,K)$ of functors 
$\Map(\mathbf{C}) \to \Map(\mathbf{C})$ such that $f=K(f)\,H(f)$ for every $f \in 
\Map(\mathbf{C})$.

\item Let $i \in \Hom_{\mathbf{C}}(A,B)$ and $p \in \Hom_{\mathbf{C}}(X,Y)$. We shall say that $i$ has the \emph{left lifting property (LLP) with respect to p} and $p$ has the 
\emph{right lifting property (RLP) with respect to i} if for every commutative diagram of solid arrow
\[ \xymatrix{ A \ar[r]^f \ar[d]_i & X \ar[d]^p \\ B \ar[r]_g \ar@{.>}[ru]^h & Y}\]
there exists a morphism $h \colon B \to X$ such that $hi=f$ and $ph=g$.

\end{enumerate}
\end{definition}

\begin{definition}\label{def.ModelStructure} 
A \emph{model structure} on a category $\mathbf{C}$ is the data of three classes of morphisms 
called weak equivalences, cofibrations, and fibrations, and two functorial factorizations 
$(C,FW)$ and $(CW,F)$ satisfying the following properties:
\begin{description}
 \item[MC1] (2-out-of-3) If $f$ and $g$ are morphisms of $\mathbf{C}$ such that $gf$ is defined and two of $f$, $g$ and $gf$ are weak equivalences, then so is the third.
 \item[MC2] (Retracts) If $f$ and $g$ are morphisms of $\mathbf{C}$ such that $f$ is a retract of $g$ and $g$ is a weak equivalence, cofibration, or fibration, then so is $f$. 
 \item[MC3] (Lifting) Define a map to be a trivial cofibration if it is both a cofibration and a weak equivalence. Similarly, define a map to be a trivial fibration if it is both a fibration and a weak equivalence. Then trivial cofibrations have the left lifting property with respect to fibrations, and cofibrations have the left lifting property with respect to trivial fibrations.
 \item[MC4] (Factorization) For any morphism $f$, $C(f)$ is a cofibration, $FW(f)$ is a trivial fibration, $CW(f)$ is a trivial cofibration, and $F(f)$ is a fibration.
\end{description}
A \emph{model category} is a complete and cocomplete category $\mathbf{C}$ equipped with a model structure.
\end{definition}

It is easy to see that in every model category we have, see e.g. \cite{Hov99}:
\begin{itemize}

\item every isomorphism is both a trivial fibration and a trivial cofibration;

\item the classes of weak equivalences, cofibrations and fibrations are closed by composition;

\item the pull-back of a fibration (resp.: trivial fibration) under any morphism is a fibration (resp.: trivial fibration);

\item the push-out of a cofibration (resp.: trivial cofibration) under any morphism is a cofibration (resp.: trivial cofibration).
\end{itemize}

\bigskip

For every commutative unitary ring $R$ we shall denote by $\coCh(R)$ the category of  cochain complexes of $R$-modules. Every object is the data of a collection of $R$-modules $X=\{X^n\}_{n\in \mathbb{Z}}$ and a differential $d=\{d_n \colon X^n \to X^{n+1}\}_{n\in \mathbb{Z}}$, where each $d_n$ is an $R$-module map and $d_{n+1} d_n =0 $ for all $n \in \mathbb{Z}$.
A morphism of cochain complexes $f \colon X \to Y$ is a collection of 
morphisms of $R$-module  $f_n \colon X^n \to Y^n$ such that $d_n f_n = f_{n+1} d_n$.
A quasi-isomorphism of cochain complexes is a morphism that induces isomorphisms on all cohomology groups.

The category $\coCh(R)$ has all small limits and colimits, which are taken degreewise. The initial and terminal object is the trivial complex, which is $0$ in each degree. 
This category carries several different model structures, \cite{sixmodel,Hov99}: in this paper we only deal with the so called projective model structure on unbounded complexes, although our results can be easily extended also to other model structures and to bounded complexes. 

\begin{theorem} \label{thm.ModelcoCh} There is a model category structure on the category of chain complexes $\coCh(R)$ whose
\begin{itemize}
 \item  weak equivalences are quasi-isomorphisms;
 \item  fibrations are the morphisms that are degreewise epimorphisms;
 \item  cofibrations are the morphisms having the left lifting property with respect every trivial fibration;
\end{itemize}
called the \emph{projective model structure.}
\end{theorem}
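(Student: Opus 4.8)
The plan is to construct the projective model structure on $\coCh(R)$ by exhibiting explicit generating (trivial) cofibrations and invoking the small object argument, in the classical way. For each $n\in\Z$ let $S^n$ denote the complex that is $R$ in degree $n$ and zero elsewhere, and let $D^n$ denote the complex that is $R$ in degrees $n-1$ and $n$ with identity differential, so that $D^n$ is contractible. Let $I=\{S^{n}\to D^{n+1}\}_{n\in\Z}$ (the inclusion hitting degree $n$) and $J=\{0\to D^{n}\}_{n\in\Z}$. First I would verify the easy characterizations: a morphism has the RLP with respect to $J$ iff it is a degreewise epimorphism (a surjectivity/lifting computation in each degree), and a morphism has the RLP with respect to $I$ iff it is a degreewise-surjective quasi-isomorphism (surjectivity on cocycles plus injectivity on cohomology, repackaged as a single lifting condition). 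Define fibrations to be the degreewise epimorphisms, weak equivalences to be the quasi-isomorphisms, and cofibrations to be the maps with the LLP against trivial fibrations, as in the statement.

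Next I would check the axioms. MC1 (2-out-of-3) is immediate from the long exact sequence in cohomology, or simply from the fact that isomorphisms compose and cancel. MC2 (retracts) is formal: each of the three classes is defined by a lifting property or by an isomorphism-on-cohomology condition, both of which are stable under retracts. Half of MC3 holds by definition of cofibration; the other half — trivial cofibrations have the LLP against fibrations — together with MC4 (factorization) is where the real work lies. For the factorizations I would run Quillen's small object argument on $I$ and on $J$ (every $R$-module, hence every bounded-degree piece of these tiny complexes, is finitely generated over itself, so the domains are sequentially small), obtaining functorial factorizations $(C,FW)$ and $(CW,F)$: $C(f)$ is $I$-cofibrant (a relative $I$-cell complex, hence a cofibration), $FW(f)$ has the RLP against $I$ hence is a trivial fibration; $CW(f)$ is a relative $J$-cell complex and $F(f)$ has the RLP against $J$ hence is a fibration. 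It remains to see that a relative $J$-cell complex is a trivial cofibration: it is a cofibration because $J\subseteq(\text{cofibrations})$ and cofibrations are closed under pushout and transfinite composition (a retract argument via the LLP characterization), and it is a weak equivalence because each pushout along $0\to D^{n}$ is a direct-sum inclusion $X\hookrightarrow X\oplus D^{n}$ up to isomorphism, which is a quasi-isomorphism since $D^{n}$ is acyclic, and transfinite composites of quasi-isomorphisms that are also injective remain quasi-isomorphisms (cohomology commutes with the relevant filtered colimits).

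The final point, and the one I expect to be the main obstacle, is the retract argument closing MC3: one shows that every trivial cofibration is a retract of a relative $J$-cell complex. Given a trivial cofibration $i\colon A\to B$, factor it as $i = F(i)\circ CW(i)$ with $CW(i)\colon A\to E$ a relative $J$-cell complex and $F(i)\colon E\to B$ a fibration; since $i$ and $CW(i)$ are weak equivalences, so is $F(i)$ by 2-out-of-3, making $F(i)$ a trivial fibration; but $i$ is a cofibration, so it has the LLP against $F(i)$, and the standard lifting square produces a retraction of $i$ onto $CW(i)$. Hence $i$ inherits the LLP against fibrations from $CW(i)$, which establishes the missing half of MC3. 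Completeness and cocompleteness of $\coCh(R)$ were already noted (limits and colimits are computed degreewise), so this finishes the verification that the data define a model category.
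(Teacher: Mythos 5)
The paper does not actually prove Theorem~\ref{thm.ModelcoCh}: it is quoted from the literature (Hovey, Thm.~2.3.11, and Barthel--May--Riehl, Thm.~1.4). Your outline reconstructs precisely the proof given in those references --- cofibrant generation by spheres and disks, the small object argument for both factorizations, and the retract argument to close MC3 --- so in substance you are reproducing the cited proof rather than taking a different route, and the global structure of your argument is sound: the identifications of $J$-injectives with degreewise epimorphisms and of $I$-injectives with surjective quasi-isomorphisms, the smallness of the domains (morphisms out of $S^n$ and $D^n$ compute $Z^n(X)$ and $X^{n-1}$, which commute with the relevant colimits), the acyclicity argument for relative $J$-cell complexes, and the retract argument are all exactly what is needed.

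There is, however, one concrete error to fix: with your stated conventions (cochain complexes, $D^{n}$ equal to $R$ in degrees $n-1$ and $n$ with identity differential raising degree), the maps you list in $I$, namely $S^{n}\to D^{n+1}$ ``hitting degree $n$'', are not morphisms of complexes --- the degree-$n$ component of $D^{n+1}$ has nonzero differential, so the only chain map $S^{n}\to D^{n+1}$ is zero, and with that $I$ the whole characterization of $I$-injectives collapses. The correct generating cofibrations in your convention are $I=\{S^{n}\to D^{n}\}_{n\in\Z}$ (equivalently $S^{n+1}\to D^{n+1}$), the inclusion of the top-degree cocycle of the disk; the off-by-one comes from transcribing Hovey's homological indexing $S^{n-1}\to D^{n}$, where the differential lowers degree, without adjusting for the cohomological grading. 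After this correction your verification that $I$-inj consists exactly of the surjective quasi-isomorphisms (surjectivity on cocycles plus injectivity on cohomology, which together imply degreewise surjectivity) is correct, and the rest of the argument goes through as written.
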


\begin{proof} See e.g. either \cite[Thm. 2.3.11]{Hov99} or \cite[Thm. 1.4]{sixmodel}.\end{proof}

\begin{remark} There exists a more concrete description of cofibrations as retracts of  
semifree extensions, see Appendix~\ref{sec.semifree}. If $X\to Y$ is a cofibration then every map 
$X^i\to Y^i$ is injective with projective cokernel; the converse is true if there exists an integer $n$ such that $X^i\to Y^i$ is an isomorphism for every $i\ge n$.
\end{remark}


\bigskip
\section{Contractions and acyclic retractions}

Every cochain complex is intended over a fixed unitary commutative ring $R$.
If $N,M$ are cochain complexes and $n$ is an integer we shall denote by $\Hom_R^n(N,M)$ the $R$-module of sequences $\{f_i\}_{i\in \Z}$, where every $f_i\colon N^i\to M^{n+i}$ is a morphism of $R$-modules.

\begin{definition}\label{def.ARdata}
An  \emph{acyclic retraction} (AR-data) is a diagram 
\[ \xymatrix{M\ar@<.4ex>[r]^\imath&N\ar@<.4ex>[l]^\pi}\]
where $M,N$ are cochain complexes of $R$-modules and $\imath,\pi$ are quasi-isomorphisms of 
cochain complexes such that $\;\pi\imath=\Id_{M}$.
A \emph{morphism} of acyclic retractions
\[ f\colon
(\xymatrix{M\ar@<.4ex>[r]^\imath&N\ar@<.4ex>[l]^\pi})\to
(\xymatrix{A\ar@<.4ex>[r]^i&B\ar@<.4ex>[l]^p})\]
is a morphism of cochain complexes  $f\colon N\to B$ such that $f\imath\pi= i pf$.
\end{definition}

Given a morphism of acyclic retractions  as in Definition~\ref{def.ARdata}, we have a commutative diagram
\[\xymatrix{M\ar[r]^{\imath}\ar[d]^{\hat{f}}&N\ar[r]^{\pi}\ar[d]^f&M\ar[d]^{\hat{f}}\\
A\ar[r]^i&B\ar[r]^p&A}\]
where  $\hat{f}=pf\imath$:
in fact 
$i\hat{f}=ipf\imath=f\imath\pi\imath=f\imath$, 
$\hat{f}\pi=pf\imath p=pipf=pf$.

\begin{definition}\label{def.contraction}
A \emph{contraction} of cochain complexes  is a pair 
$(\xymatrix{M\ar@<.4ex>[r]^\imath&N\ar@<.4ex>[l]^\pi},h)$, where $\xymatrix{M\ar@<.4ex>[r]^\imath&N\ar@<.4ex>[l]^\pi}$ is an acyclic retraction and an element $h\in\Hom^{-1}_R(N,N)$, called homotopy, that satisfies the following conditions:
\begin{description}

\item[C1] $\imath\pi-\operatorname{Id}_{N}=d_{N}h+hd_{N}$;

\item[C2]  $\pi h=h\imath=0$;

\item[C3] $h^2=0$.
\end{description}
A morphism of contraction $f\colon
(\xymatrix{M\ar@<.4ex>[r]^\imath&N\ar@<.4ex>[l]^\pi}, h)\to
(\xymatrix{A\ar@<.4ex>[r]^i&B\ar@<.4ex>[l]^p}, k)$ is 
a morphism of cochain complexes $f\colon N\to B$ such that $fh=kf$.
\end{definition}

For later use, we point out that 
if $(\xymatrix{M\ar@<.4ex>[r]^\imath&N\ar@<.4ex>[l]^\pi}, h)$ is a contraction, then $hdh=-h$, since
\[  h+hdh=h+(\imath\pi-\Id_N-dh)h=h-h=0\,.\]

Every morphism of contractions is in particular a morphism of acyclic retractions: in the notation of Definition~\ref{def.contraction} we have 
\[ ipf=(\Id_B+kd+dk)f=f(\Id_N+kd+dk)=f\imath\pi\,.\]

Thus, denoting by $\mathbf{AR}(R)$ and $\mathbf{Contr}(R)$ the categories of acyclic retractions and 
contractions, we have two forgetful functors
\[  
\mathbf{Contr}(R)\xrightarrow{\;\alpha\;} \mathbf{AR}(R)\xrightarrow{\;\beta\;}\mathbf{coCh}(R),\]
\[ (\xymatrix{M\ar@<.4ex>[r]^\imath&N\ar@<.4ex>[l]^\pi},h)\;\mapsto\; 
\xymatrix{M\ar@<.4ex>[r]^\imath&N\ar@<.4ex>[l]^\pi}\;\mapsto\; N\,.\]

It is straightforward to check that the categories $\mathbf{AR}(R)$ and $\mathbf{Contr}(R)$ are complete and cocomplete.  We are now ready to state the main results of this paper.

\begin{theorem}\label{thm.modelstructureAR} 
There exists a model structure on the category $\mathbf{AR}(R)$ where a morphism $f$ is a weak equivalence, cofibration, fibration if and only if $\beta(f)$ is, and  
the factorizations depends functorially on the factorizations in $\mathbf{coCh}(R)$.
\end{theorem}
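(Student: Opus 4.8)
The plan is to transport the model structure from $\mathbf{coCh}(R)$ to $\mathbf{AR}(R)$ by defining the three classes of morphisms via $\beta$ and then verifying the axioms MC1--MC4 of Definition~\ref{def.ModelStructure}. Since $\beta$ is faithful and reflects isomorphisms, and since (as already noted) $\mathbf{AR}(R)$ is complete and cocomplete, axioms MC1 and MC2 are immediate: they only concern compositions, retract diagrams, and the 2-out-of-3 property, all of which are detected after applying $\beta$ because $\beta$ preserves composition and sends a retract diagram in $\mathbf{AR}(R)$ to a retract diagram in $\mathbf{coCh}(R)$. The real content is in the lifting axiom MC3 and the functorial factorization axiom MC4; the latter will in turn produce most of the lifts needed for MC3 by the standard retract argument, so I would treat MC4 first.

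For MC4, suppose we are given a morphism $f\colon(\xymatrix{M\ar@<.4ex>[r]^\imath&N\ar@<.4ex>[l]^\pi})\to(\xymatrix{A\ar@<.4ex>[r]^i&B\ar@<.4ex>[l]^p})$ in $\mathbf{AR}(R)$. Applying $\Phi\beta$ we get a morphism $\bar f\colon N\to B$ of cochain complexes; feeding this into the functorial factorizations $(C,FW)$ and $(CW,F)$ of Theorem~\ref{thm.ModelcoCh} yields $N\xrightarrow{c} W\xrightarrow{q} B$ with $c$ a cofibration, $q$ a trivial fibration. The task is to equip $W$ with the structure of an acyclic retraction over which $c$ and $q$ become $\mathbf{AR}(R)$-morphisms. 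The natural guess is to take the acyclic retraction $\xymatrix{M\ar@<.4ex>[r]^{c\imath}&W\ar@<.4ex>[l]^{?}}$; the section from $M$ is $c\imath$, and one needs a quasi-isomorphic retraction $W\to M$ splitting it. Here I expect to invoke one of the ``basic tricks'' advertised in the introduction: given the cofibration $c\colon N\to W$, the acyclic retraction $(\imath,\pi)$ on $N$, and the compatibility $q\,c=i\,p\,\bar f$ with the acyclic retraction $(i,p)$ on $B$, there should be a canonical way --- presumably using a lift in the projective model structure against the trivial fibration $W\to M$ obtained by composing with $\pi$-type data --- to produce the retraction $W\to M$ so that all squares commute. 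The functoriality of this construction follows from functoriality of the $\mathbf{coCh}(R)$ factorizations together with the canonicity (uniqueness, or a chosen-lift functoriality) of the trick.

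Once MC4 is established, MC3 follows by the classical retract argument: a trivial cofibration in $\mathbf{AR}(R)$ is a trivial cofibration after $\beta$, factor it through $(CW,F)$, the lift against the $F$-part exists in $\mathbf{coCh}(R)$ and then must be promoted to an $\mathbf{AR}(R)$-morphism --- again the basic trick, or a uniqueness statement for it, supplies the compatibility with the retraction data --- exhibiting our map as a retract of a map with the lifting property, and then MC2 finishes it; the cofibration-versus-trivial-fibration case is symmetric. The main obstacle, and the step deserving the most care, is precisely the promotion of cochain-complex data (lifts, factorization objects) to acyclic-retraction data: one must check that the retraction maps one constructs are genuine quasi-isomorphisms satisfying $\pi\imath=\Id$ and that every relevant square in the definition of a morphism of $\mathbf{AR}(R)$ commutes, and that the choices can be made functorially. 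This is exactly where the algebraic ``basic tricks'' on acyclic retractions enter, and I would state and prove the needed trick as a separate lemma before carrying out the axiom verification.
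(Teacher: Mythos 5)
Your treatment of MC1--MC2 and your instinct that lifts produced in $\mathbf{coCh}(R)$ must be corrected to morphisms of acyclic retractions are consistent with the paper (the correction is exactly the first basic trick, $h\mapsto h-\imath\pi h-h\imath'\pi'+2\imath\pi h\imath'\pi'$, applied directly to the chain-level lift; note that the retract-argument packaging you propose for MC3 is circular, since it presupposes that the $CW$-part of the factorization already has the left lifting property in $\mathbf{AR}(R)$, which is part of what MC3 asserts). The genuine gap is in MC4. You propose to factor the underlying cochain map $\beta(f)\colon N\to B$ as a cofibration $c\colon N\to W$ followed by a trivial fibration $q\colon W\to B$, and then to equip $W$ with an acyclic retraction onto $M$ with section $c\imath$, the retraction $r\colon W\to M$ to be produced by some lifting trick. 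No such $r$ exists in general, so no trick can supply it. Indeed, for $c$ to be a morphism of acyclic retractions one needs $c\imath\pi=(c\imath)\,r\,c$, which forces $rc=\pi$ because $c\imath$ is degreewise injective; and for $q$ one needs $q(c\imath)r=ipq$, i.e.\ $q$ must factor through $qc\imath\colon M\to B$. Now take source and target to be trivial retractions ($\imath=\pi=\Id_N$, $i=p=\Id_B$) and $f=0$ with $B\neq 0$: the second condition becomes $0=q$, impossible since $q$, being a trivial fibration, is degreewise surjective onto $B$. So the middle object of the factorization cannot in general retract onto $M$, and there is also no ``trivial fibration $W\to M$'' available to lift against, as your sketch assumes.

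The paper's construction is genuinely different and avoids this obstruction by changing the small complex of the middle object. Writing $\hat f=pf\imath\colon M\to A$ for the induced map on the small complexes, one first factors $\hat f$ in $\mathbf{coCh}(R)$ as a cofibration $g\colon M\to P$ followed by a trivial fibration $h\colon P\to A$. One then forms the pushout $P\amalg_M N$ (of $g$ along $\imath$) and the pullback $B\times_A P$ (of $B\xrightarrow{p}A\xleftarrow{h}P$), and factors the canonical comparison map $\phi\colon P\amalg_M N\to B\times_A P$ as a cofibration $\gamma$ followed by a trivial fibration $\delta$ through an object $Q$. The middle acyclic retraction is $P\to Q\to P$, with section $P\to P\amalg_M N\xrightarrow{\gamma}Q$ and retraction $Q\xrightarrow{\delta}B\times_A P\to P$: the identity of $P$ is recovered by construction of $\phi$, the retraction is a trivial fibration (a pullback of the split surjective quasi-isomorphism $p$ composed with $\delta$), and the section is then a weak equivalence by 2-out-of-3. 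The factorization of $f$ itself is $N\to P\amalg_M N\xrightarrow{\gamma}Q$ (a pushout of the cofibration $g$ followed by $\gamma$, hence a cofibration) and $Q\xrightarrow{\delta}B\times_A P\to B$ (trivial fibration $\delta$ followed by a pullback of the trivial fibration $h$), and everything is functorial because it is assembled from the functorial factorizations in $\mathbf{coCh}(R)$ together with (co)limits. The missing idea in your proposal is precisely this: factor the map of small complexes first and let the small complex of the middle object be the new complex $P$ rather than $M$; the pushout/pullback comparison map then encodes all the compatibility constraints at once.
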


\begin{theorem}\label{thm.modelstructureContr} There exists a model structure on the category $\mathbf{Contr}(R)$ where a morphism $f$ is a weak equivalence, cofibration, fibration if and only if $\beta\alpha(f)$ is, and  
the factorizations depends functorially on the factorizations in $\mathbf{coCh}(R)$.
\end{theorem}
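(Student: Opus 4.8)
The plan is to transport the model structure on $\mathbf{AR}(R)$ given by Theorem~\ref{thm.modelstructureAR} along the forgetful functor $\alpha\colon\mathbf{Contr}(R)\to\mathbf{AR}(R)$, which on objects simply discards the homotopy. By definition a morphism of $\mathbf{Contr}(R)$ is a weak equivalence, cofibration or fibration exactly when its image under $\beta\alpha$ is one in $\mathbf{coCh}(R)$, so, since $\beta\alpha$ is a functor, MC1 and MC2 are immediate: if two of $f,g,gf$ are weak equivalences then so are their images under $\beta\alpha$ and the third is recovered from MC1 in $\mathbf{coCh}(R)$, while a retract diagram in $\mathbf{Contr}(R)$ is carried by $\beta\alpha$ to a retract diagram in $\mathbf{coCh}(R)$, so MC2 follows from MC2 in $\mathbf{coCh}(R)$. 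As $\mathbf{Contr}(R)$ is complete and cocomplete, the whole content of the theorem is the construction of diagonal fillers (MC3) and of functorial factorizations (MC4), and the only new ingredient with respect to Theorem~\ref{thm.modelstructureAR} is the homotopy operator.

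For MC4, given a morphism $f\colon\mathcal X\to\mathcal Y$ of contractions I would first apply the factorization of $\mathbf{AR}(R)$ supplied by Theorem~\ref{thm.modelstructureAR} to $\alpha(f)$, obtaining a factorization $\alpha(\mathcal X)\xrightarrow{u}\mathcal P\xrightarrow{v}\alpha(\mathcal Y)$ in $\mathbf{AR}(R)$, with underlying cochain complex $P$, in which $\beta(u)$ is a cofibration and $\beta(v)$ a trivial fibration, and which is functorial in $f$ because it is functorial in the chosen factorization of $\mathbf{coCh}(R)$. What remains is to endow $P$ with a homotopy $\ell\in\Hom_R^{-1}(P,P)$ satisfying the three conditions C1, C2, C3 and such that $uh=\ell u$ and $v\ell=kv$, where $h$ and $k$ denote the homotopies of $\mathcal X$ and $\mathcal Y$, and moreover to produce $\ell$ by a rule natural in $f$. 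This is exactly the role of the new basic trick on contractions. Once such an $\ell$ is available, $u$ and $v$ are morphisms of contractions, their images under $\beta\alpha$ are a cofibration and a trivial fibration, and $f=v\circ u$; thus the factorization $(C,FW)$ is obtained, and $(CW,F)$ is produced identically from the other functorial factorization of $\mathbf{coCh}(R)$.

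For MC3, let $i\colon\mathcal X\to\mathcal Y$ be a trivial cofibration, $p\colon\mathcal Z\to\mathcal W$ a fibration in $\mathbf{Contr}(R)$, together with a commutative square. Applying $\beta\alpha$ yields a commutative square in $\mathbf{coCh}(R)$ in which $\beta\alpha(i)$ is a trivial cofibration and $\beta\alpha(p)$ a fibration, hence a diagonal filler $\lambda$ of cochain complexes. The map $\lambda$ need not commute with the homotopies of $\mathcal Y$ and $\mathcal Z$; the new basic trick is invoked once more to modify $\lambda$ into a filler $\lambda'$ that does, without destroying either triangle identity, so that $\lambda'$ is the required morphism of contractions. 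The case of a cofibration lifting against a trivial fibration is completely analogous. Together with MC1, MC2 and MC4 this establishes the four axioms and proves the theorem.

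The main obstacle is the new basic trick itself: the construction of the homotopy $\ell$ on the middle term of a factorization, and the parallel correction of a diagonal filler. Three things must hold simultaneously — the contraction identities C1--C3, compatibility with the two prescribed homotopies (one on each side of the factorization), and naturality of the formula defining $\ell$, without which the factorization of $\mathbf{Contr}(R)$ fails to be functorial. I expect this to be settled by exhibiting an explicit universal formula for $\ell$ in terms of the given data, a relative version — with a cofibration on one side and a fibration on the other — of the classical procedures that force the side conditions $\pi h=h\imath=0$ and $h^2=0$ on a homotopy operator, these classical normalizations being, presumably, the first two of the three basic tricks.
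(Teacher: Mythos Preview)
Your overall strategy is right and matches the paper's, but you have conflated two distinct constructions under the umbrella of a single ``new basic trick'', and this is a genuine gap in the MC4 step.

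The third basic trick (Lemma~\ref{lem.trick3}) takes as input a morphism $f$ of \emph{acyclic retractions} between two given contractions and returns $\tilde f=f-dkfhd$, a morphism of contractions; because it commutes with composition by morphisms that are already contraction morphisms, it preserves the two triangle identities in a lifting square. But it does \emph{not} manufacture a homotopy on a new object: given a factorization $\alpha(\mathcal X)\xrightarrow{u}\mathcal P\xrightarrow{v}\alpha(\mathcal Y)$ in $\mathbf{AR}(R)$, there is no homotopy on $\mathcal P$ available to feed into any such formula. The paper handles this by a separate argument (Lemma~\ref{lem.factorizationSDR}): an SDR homotopy $\ell$ on $P$ compatible with $u$ and $v$ is the same thing as a morphism $P\to P(P)$ into the path object fitting into a certain commutative square, and one obtains it as a diagonal filler in $\mathbf{coCh}(R)$ against the map $P(P)\to P(N)\times_{N\oplus N}(P\oplus P)$, which is a fibration, trivial exactly when $v$ is. The second basic trick then normalizes this SDR homotopy into a genuine contraction. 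Only after $\ell$ exists does the third basic trick enter, and only to make the factorization \emph{functorial}: the induced map $\psi$ between middle objects coming from the $\mathbf{AR}(R)$ factorization is a morphism of acyclic retractions but not of contractions, and $\psi\mapsto\tilde\psi$ repairs that. So your expectation of ``an explicit universal formula for $\ell$'' is not how the argument goes; $\ell$ is produced by a lifting, not by a closed expression in $h,k,u,v$.

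A smaller point on MC3: you lift directly in $\mathbf{coCh}(R)$ and then invoke the trick. But Lemma~\ref{lem.trick3} requires its input to already be a morphism of acyclic retractions, so you must first apply the first basic trick (equivalently, lift in $\mathbf{AR}(R)$ via Theorem~\ref{thm.modelstructureAR}) before the third trick is applicable. The paper does precisely this two-step correction.
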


The proofs will be given in next sections after some preparatory algebraic results about  contractions.

\bigskip
\section{The basic tricks}
\label{sec.basictrick}

This section is devoted  some algebraic properties about contractions and acyclic retractions that will used in the proof of the main theorems.

\begin{lemma}[First basic trick]\label{lem.trick2} Let
$\xymatrix{M\ar@<.4ex>[r]^\imath&N\ar@<.4ex>[l]^\pi}$ and $\xymatrix{A\ar@<.4ex>[r]^i&B\ar@<.4ex>[l]^p}$ be acyclic retractions  and let $f\colon N\to B$ be a morphism of cochain complexes. Then 
\[ \hat{f}:=f-ipf-f\imath\pi+2ipf\imath\pi\colon N\to B\]
is a morphism of acyclic retractions. 
Moreover:
\begin{enumerate}
\item the morphism $f-\hat{f}=ipf(\Id-\imath\pi)+(\Id-ip)f\imath\pi$ induces the trivial morphism in cohomology,
\item $\hat{f}=f$ whenever $f$ is a morphism of acyclic retractions, 
\item if $g$ is a morphism of acyclic retractions and $gf$ (resp.: $fg$) is defined, then 
$\widehat{gf}=g\hat{f}$ (resp.: $\widehat{fg}=\hat{f}g$). 
\end{enumerate} 
\end{lemma}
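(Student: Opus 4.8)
The plan is to verify directly that $\hat f$ is a morphism of acyclic retractions and then check the three listed properties by routine algebra, using only the defining identities $\pi\imath=\Id_M$, $pi=\Id_A$, and the fact that $\imath,\pi,i,p,f$ are all morphisms of cochain complexes (so that $\hat f$, being an $R$-linear combination of composites of chain maps, is automatically a chain map). To show $\hat f$ is a morphism of acyclic retractions I must verify $\hat f\,\imath\pi = i p\,\hat f$. First I would record the two "collapsing" identities obtained from $\pi\imath=\Id_M$ and $pi=\Id_A$: namely $\imath\pi\cdot\imath\pi=\imath\pi$ and $ip\cdot ip=ip$ (both sides are idempotents), and also $p f\imath\pi = pf\imath\pi$, $ip f\imath\pi\cdot\imath\pi = ipf\imath\pi$, etc. Then I would compute $\hat f\,\imath\pi$ and $ip\,\hat f$ term by term from the four summands $f$, $-ipf$, $-f\imath\pi$, $2ipf\imath\pi$: on the right one gets $ipf\,\imath\pi - ipf\,\imath\pi - ipf\imath\pi + 2ipf\imath\pi = ipf\imath\pi$ (using $(ip)^2=ip$), and on the left one gets $f\imath\pi - ipf\imath\pi - f\imath\pi\imath\pi + 2ipf\imath\pi\imath\pi = f\imath\pi - ipf\imath\pi - f\imath\pi + 2ipf\imath\pi = ipf\imath\pi$ (using $(\imath\pi)^2=\imath\pi$). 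These agree, so $\hat f$ is a morphism of acyclic retractions.

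For item (1), I would first rewrite $f-\hat f = ipf + f\imath\pi - 2ipf\imath\pi$ and then factor it as the claimed sum $ipf(\Id-\imath\pi) + (\Id - ip)f\imath\pi$; expanding this sum gives $ipf - ipf\imath\pi + f\imath\pi - ipf\imath\pi = ipf + f\imath\pi - 2ipf\imath\pi$, confirming the identity. To see this induces the zero map on cohomology, observe that $\imath\pi - \Id_N = dh_N + h_N d$ is null-homotopic whenever $N$ carries a contraction — but in the bare AR-setting there is no homotopy, so instead I would argue cohomologically: $\pi$ and $\imath$ are quasi-isomorphisms with $\pi\imath=\Id$, hence $\imath\pi$ induces an idempotent automorphism of $H^*(N)$, which must be the identity; therefore $\imath\pi - \Id_N$, $\Id_N - \imath\pi$, and symmetrically $\Id_B - ip$, all induce $0$ on cohomology. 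Since each summand of $f-\hat f$ has a factor of the form $(\Id-\imath\pi)$ (on the right) or $(\Id-ip)$ (on the left), and induced maps on cohomology compose, $f-\hat f$ induces $0$.

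Item (2) is immediate: if $f$ is already a morphism of acyclic retractions then $ipf = f\imath\pi$, so $f - \hat f = ipf + f\imath\pi - 2ipf\imath\pi = 2f\imath\pi - 2(f\imath\pi)\imath\pi = 2f\imath\pi - 2f\imath\pi = 0$ using $(\imath\pi)^2 = \imath\pi$. For item (3), suppose $g\colon(\xymatrix{A\ar@<.4ex>[r]^i&B\ar@<.4ex>[l]^p})\to(\xymatrix{A'\ar@<.4ex>[r]^{i'}&B'\ar@<.4ex>[l]^{p'}})$ is a morphism of acyclic retractions, so $gi p = i'p'g$. Then $\widehat{gf} = gf - i'p'gf - gf\imath\pi + 2i'p'gf\imath\pi$; substituting $i'p'g = gip$ in the second and fourth terms gives $gf - gipf - gf\imath\pi + 2gipf\imath\pi = g(f - ipf - f\imath\pi + 2ipf\imath\pi) = g\hat f$. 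The case $\widehat{fg}=\hat fg$ is symmetric, using instead that $g$ on the source side satisfies the dual relation and that $\imath\pi$ for the source AR-data is what appears.

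I do not expect any genuine obstacle here; the only point requiring a moment's care is item (1), where one must resist writing down a homotopy (none is available for a bare acyclic retraction) and instead use the elementary fact that an idempotent automorphism of a module is the identity — everything else is bookkeeping with the two idempotents $\imath\pi$ and $ip$.
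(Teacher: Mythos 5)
Your verification is correct, and it is exactly the direct computation the paper has in mind when it dismisses the proof as ``easy and straightforward'': the idempotent identities $(\imath\pi)^2=\imath\pi$, $(ip)^2=ip$ give $\hat{f}\imath\pi=ip\hat{f}=ipf\imath\pi$, and the cohomological point in item (1) is handled properly by noting that $H(\imath\pi)$ and $H(ip)$ are idempotent isomorphisms, hence identities, rather than by invoking a (nonexistent) homotopy. Nothing is missing; the remaining items are checked exactly as one should.
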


\begin{proof} Easy and straightforward.\end{proof}

There exists in literature the the notion of strong deformation data, which lies in an intermediate position with respect to acyclic retractions and contractions.

\begin{definition}\label{def.morfismoSDR}
A \emph{strong deformation retraction} (SDR-data) of cochain complexes  is a pair 
$(\xymatrix{M\ar@<.4ex>[r]^\imath&N\ar@<.4ex>[l]^\pi},h)$, where $\xymatrix{M\ar@<.4ex>[r]^\imath&N\ar@<.4ex>[l]^\pi}$ is an acyclic retraction and $h\in\Hom^{-1}_R(N,N)$ satisfy 
$\imath\pi-\operatorname{Id}_{N}=d_{N}h+hd_{N}$.
A morphism of strong deformation retractions  $f\colon
(\xymatrix{M\ar@<.4ex>[r]^\imath&N\ar@<.4ex>[l]^\pi}, h)\to
(\xymatrix{A\ar@<.4ex>[r]^i&B\ar@<.4ex>[l]^p}, k)$ is 
a morphism of cochain complexes $f\colon N\to B$ such that $fh=kf$.
\end{definition}

Thus, if $\mathbf{SDR}(R)$ denotes the category of strong deformation retractions, we have that 
$\mathbf{Contr}(R)$ is a full subcategory of $\mathbf{SDR}(R)$. Every morphism of strong deformation retractions is also a morphism a acyclic retractions.

\begin{lemma}[Second basic trick]\label{lem.trick1} 
Let $(\xymatrix{M\ar@<.4ex>[r]^\imath&N\ar@<.4ex>[l]^\pi}, h)$ be a strong deformation retraction  and denote 
\[D(h):=hd+dh=\imath\pi-\Id_N\colon N\to N.\] 
Then  the pair
\[ (\xymatrix{M\ar@<.4ex>[r]^\imath&N\ar@<.4ex>[l]^\pi}, \tilde{h}),\quad\text{where}\quad
\tilde{h}=-D(h)hD(h)dD(h)hD(h),\]
is a contraction. If $(\xymatrix{M\ar@<.4ex>[r]^\imath&N\ar@<.4ex>[l]^\pi}, h)$ is already a contraction, then $h=\tilde{h}$.
\end{lemma}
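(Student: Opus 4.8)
The plan is to build $\tilde h$ from $h$ in two stages, in the classical spirit of homological perturbation: a first modification that forces the side conditions \textbf{C2}, and a second one that in addition forces \textbf{C3}, while the relation \textbf{C1} survives at each stage. Set $P:=D(h)=\imath\pi-\Id_N$ and $h_1:=PhP\in\Hom^{-1}_R(N,N)$. A direct inspection shows that the homotopy displayed in the statement is exactly $\tilde h=-h_1dh_1$, so the goal is to prove that the given acyclic retraction equipped with $-h_1dh_1$ is a contraction, and that $\tilde h=h$ when $h$ already was one.

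First I would record the elementary facts about $P$. Since $\imath,\pi$ are chain maps with $\pi\imath=\Id_M$, the map $P$ is a chain map, so $dP=Pd$; moreover $P^2=\imath\pi\imath\pi-2\imath\pi+\Id_N=-P$, and $\pi P=0=P\imath$. From $P^2=-P$ one obtains $Ph_1=h_1P=-h_1$, and from $dP=Pd$ one obtains
\[
D(h_1)=P(dh+hd)P=P^3=P,
\]
so $h_1$ satisfies \textbf{C1}; similarly $\pi P=0=P\imath$ gives $\pi h_1=0=h_1\imath$, so $h_1$ satisfies \textbf{C2} (but in general not \textbf{C3}). Using $dh_1+h_1d=D(h_1)=P$ together with $Ph_1=h_1P=-h_1$, I would then rewrite $\tilde h$ in the two convenient forms
\[
\tilde h=-h_1dh_1=-(P-dh_1)h_1=h_1+dh_1^2
\qquad\text{and}\qquad
\tilde h=-h_1(P-h_1d)=h_1+h_1^2d .
\]

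These two forms make the three conditions for $\tilde h$ almost mechanical. Condition \textbf{C2} is immediate from $\pi h_1=0=h_1\imath$. For \textbf{C1}, composing the first form with $d$ on the left and the second form with $d$ on the right annihilates the quadratic terms (since $d^2=0$), so $d\tilde h+\tilde hd=dh_1+h_1d=P=D(h)$. The one genuinely delicate point, which I expect to be the main obstacle, is \textbf{C3}: a symmetric expansion of $\tilde h^2$ tends to leave a spurious factor $2$, so instead I would first establish the auxiliary identity
\[
dh_1^2d=(P-h_1d)(P-dh_1)=-P+dh_1+h_1d=0
\]
(using $dP=Pd$, $P^2=-P$ and $D(h_1)=P$), and then expand
\[
\tilde h^2=(h_1+dh_1^2)^2=h_1^2+h_1dh_1^2+dh_1^2h_1+dh_1^2dh_1^2 .
\]
The last summand vanishes because $dh_1^2d=0$, while $h_1dh_1^2=-h_1^2-dh_1^3$ and $dh_1^2h_1=dh_1^3$ cancel the first summand and each other; hence $\tilde h^2=0$.

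For the last assertion, assume the given acyclic retraction with homotopy $h$ is already a contraction. Then \textbf{C2} gives $Ph=\imath\pi h-h=-h$ and $hP=h\imath\pi-h=-h$, hence $h_1=PhP=h$, and therefore $\tilde h=-h_1dh_1=-hdh$; since $hdh=-h$ for contractions, as noted just before this lemma, we conclude $\tilde h=h$.
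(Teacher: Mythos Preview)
Your proof is correct and follows essentially the same approach as the paper: you introduce $h_1=D(h)hD(h)$ (the paper's $k$), verify \textbf{C1} and \textbf{C2} for it, rewrite $\tilde h=-h_1dh_1$ in the two forms $h_1+dh_1^2=h_1+h_1^2d$, and then check the contraction axioms. The only minor difference is in the verification of \textbf{C3}: the paper observes from the two forms that $h_1^2d=dh_1^2$ and then computes $\tilde h^2=h_1d(h_1^2d)h_1=h_1d(dh_1^2)h_1=0$ in one line, whereas you first prove $dh_1^2d=0$ and expand $(h_1+dh_1^2)^2$ term by term---a slightly longer but equally valid route.
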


\begin{proof} This is well known \cite{LS} and we write the proof only for completeness. We have the equalities  
\[dD(h)=D(h)d=dhd,\qquad D(h)^2=(\imath\pi-\Id_N)^2=\Id_N-\imath\pi=-D(h),\] 
\[D(h)\imath=(\imath\pi-\Id_N)\imath=0,\qquad 
\pi D(h)=\pi(\imath\pi-\Id_N)=0\,.\] 
Therefore, setting $k=D(h)hD(h)$, we get $k\imath=\pi k=0$ and
\[ dk+kd=dD(h)hD(h)+D(h)hD(h)d=D(h)(dh+hd)D(h)=D(h)^3=D(h).\]
By definition $\tilde{h}=-kdk$, therefore $\tilde{h}\imath=\pi\tilde{h}=0$,
\[ \tilde{h}=-kdk=k(\Id_N-\imath\pi+kd)=k+k^2d,\qquad \tilde{h}=(\Id_N-\imath\pi+dk)k=k+dk^2\,,\] 
and then $k^2d=dk^2$. Finally 
\[d\tilde{h}+\tilde{h}d=d(k+dk^2)+(k+k^2d)d=dk+kd=\imath\pi-\Id_N\,,\] 
\[ \tilde{h}^2=kdkkdk=kd(k^2d)k=kd(dk^2)k=0\,.\]
If $\pi h=h\imath=h^2=0$ then $D(h)h=hD(h)=-h$ and therefore $k=D(h)hD(h)=h$, $k^2=h^2=0$, 
$\tilde{h}=k+dk^2=k=h$. 
\end{proof}

It is plain that the second basic trick is functorial in the following sense: 
given a morphism of strong deformation retractions 
\[ f\colon
(\xymatrix{M\ar@<.4ex>[r]^\imath&N\ar@<.4ex>[l]^\pi}, h)\to
(\xymatrix{A\ar@<.4ex>[r]^i&B\ar@<.4ex>[l]^p}, k),\]
then $\tilde{k}f=f\tilde{h}$.

\begin{lemma}[Third basic trick]\label{lem.trick3} 
Let $(\xymatrix{M\ar@<.4ex>[r]^\imath&N\ar@<.4ex>[l]^\pi}, h)$, $(\xymatrix{A\ar@<.4ex>[r]^i&B\ar@<.4ex>[l]^p}, k)$ be contractions and let $f\colon N\to B$ be a morphism of acyclic retractions. Then 
$\tilde{f}=f-dkfhd$
is a morphism of contractions. Moreover:
\begin{enumerate}
\item the morphism $f-\tilde{f}$ is homotopic to $0$,
\item $\tilde{f}=f$ whenever $f$ is a morphism of contractions, 
\item the transformation  $f\mapsto \tilde{f}$ commutes with compositions.
\end{enumerate}  
\end{lemma}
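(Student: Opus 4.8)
The plan is to verify the three claims by direct, if somewhat tedious, computation, relying heavily on the algebraic identities already established for contractions: namely $d h + h d = \imath\pi - \Id_N$, the side conditions $\pi h = h\imath = h^2 = 0$, the derived identity $hdh = -h$ (and symmetrically $h d h = -h$, together with its consequence $d h d h = \ldots$), and the analogous identities $d k + k d = i p - \Id_B$, $p k = k i = k^2 = 0$, $k d k = -k$ for the target contraction. Since $f$ is assumed to be a morphism of acyclic retractions, we also have $f \imath \pi = i p f$ available throughout.

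\emph{First I would check that $\tilde f = f - d k f h d$ is a chain map.} This is immediate: $d(dkfhd) = 0 = (dkfhd)d$ up to reordering through $df = fd$, so $d\tilde f = df = fd = \tilde f d$ provided one is careful that $d(dkfhd) = 0$ because of the leading $dd$ and $(dkfhd)d = dkfhdd = 0$ because of the trailing $dd$. Next I would verify that $\tilde f$ is a morphism of acyclic retractions, i.e. $\tilde f \imath \pi = i p \tilde f$. Using $hd\imath = h d \imath$ and the side conditions — in particular $h\imath = 0$, hence $hd\imath = (\imath\pi - \Id_N - dh)\imath = -\imath + 0 = \ldots$ wait, more directly $hd\imath$ appears in $\tilde f \imath\pi = f\imath\pi - dkfhd\imath\pi$, and one computes $hd\imath\pi$ using $d\imath = \imath d_M$ and then the identity for $D(h)$; symmetrically $ipdkfhd = dkfhd$-type cancellations come from $pdk = -pk + \ldots$. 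The cleanest route is to observe $d k f h d \cdot \imath \pi$ and $i p \cdot d k f h d$ and show both equal the same thing by pushing $\imath\pi$ through using $h \imath = 0$ and pushing $ip$ through using $pk = 0$; this exploits that $f$ is already an AR-morphism so the "bad" terms match.

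\emph{Then the heart of the argument: $\tilde f h = k \tilde f$.} Expanding, this is $fh - dkfhdh = kf - kdkfhd$. Now $dh = \imath\pi - \Id_N - hd$, so $fhdh = fh(\imath\pi - \Id_N) - fhhd = fh\imath\pi - fh$ using $h^2 = 0$; and $fh\imath\pi = i p f h \imath \pi / \ldots$ — actually one uses $hdh = -h$ directly, giving $dkfhdh = dkf(hdh) = -dkfh$. On the other side $kdk = -k$ gives $kdkfhd = -kfhd$. So the claimed identity becomes $fh + dkfh = kf + kfhd$, i.e. $(\Id + dk)fh - k(\Id + fhd)$; using $\Id_B + dk = ip - kd$ on the left and the AR-compatibility plus $kd = ip - \Id_B - dk$ to rewrite, this should collapse to $0$ after invoking $fh\imath\pi$-type terms vanish against $pk=0$, $ki=0$. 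This step — getting the homotopy-compatibility $\tilde f h = k \tilde f$ — is the one I expect to be the main obstacle, because it requires juggling all of: $h^2 = k^2 = 0$, $hdh = -h$, $kdk = -k$, the $D(h)$-identities, and the AR-hypothesis $f\imath\pi = ipf$ simultaneously, and a single sign or reordering error derails it.

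\emph{Finally the three numbered properties.} For (1), $f - \tilde f = d k f h d$; since $d$ and $k$ are degree $\pm 1$ maps, $dkfhd = d(kfhd) + (dkfh)d$ exhibits it as $dg + gd$-shaped only if the cross term vanishes, which it does because $d(kfhd)$ has a trailing $d$-free piece... more precisely one writes $dkfhd = d \circ (kfhd) = d \circ s + s \circ d$ won't quite work, so instead note $kfhd$ is a degree $-1$ map $N \to B$ and $d(kfhd) + (kfhd)d = dkfhd + kfhdd = dkfhd$, so $f - \tilde f = dg + gd$ with $g = kfhd$, hence null-homotopic (note $g$ need not be a chain map, but that is fine for "homotopic to $0$" in the sense of the difference being a boundary in the Hom-complex). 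For (2), if $f$ is already a contraction morphism then $fh = kf$, so $dkfhd = dfhhd$... no: $dkfhd = d(kf)hd = d(fh)hd = dfh^2 d = 0$, giving $\tilde f = f$. For (3), given composable contraction morphisms, $\widetilde{gf} = gf - dk''g f h d$ where the homotopies are $h$ (source), $k$ (middle), $k''$ (target); since $g$ is a contraction morphism $k'' g = g k$, so $dk''gfhd = dgkfhd = g \, dkfhd$ (as $g$ is a chain map commuting with $d$), whence $\widetilde{gf} = gf - g\,dkfhd = g(f - dkfhd) = g\tilde f = \tilde g \tilde f$ using (2) on $g$; the case $\widetilde{fg} = \tilde f g$ is symmetric, using $fh = kf$ fails (f need not be a morphism yet) — rather one uses that $g$ maps into the middle contraction and $f$'s source is that middle one, so $\widetilde{fg} = fg - dkfghd'$ with $d'$ the source differential of $g$'s domain; here $f h'' = \ldots$ hmm, one uses instead $\tilde f g = (f - dkfh d_N)g = fg - dkf h d_N g = fg - dkfgh' d'$ if $g$ is a chain map and $h d_N g = \ldots$; this requires $h_N g = g h'$, which holds iff $g$ is an SDR/contraction morphism — which it is. So both cases reduce cleanly to (2) plus chain-map commutation. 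I would present these in the order (chain map) $\to$ (AR-morphism) $\to$ (homotopy compatibility) $\to$ (1),(2),(3), flagging the third as where all the identities are spent at once.
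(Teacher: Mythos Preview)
Your computations for the main claim, (1), and (2) are correct in substance, though the write-up reads as working notes rather than a proof. In particular your route to $k\tilde f = \tilde f h$ --- reducing via $hdh=-h$, $kdk=-k$ to the identity $fh + dkfh = kf + kfhd$, and then substituting $\Id_B + dk = ip - kd$ and $\Id_N + hd = \imath\pi - dh$ --- does work: both sides collapse to $-kfdh$ once you use the AR-hypothesis $ipf=f\imath\pi$ together with $\pi h = 0$ and $ki=0$. This is actually a touch more direct than the paper's argument, which instead introduces $\gamma = kf - fh$, checks that $d\gamma + \gamma d = 0$ and $\gamma\imath\pi = 0$, and then rewrites $k\tilde f - \tilde f h = \gamma + \gamma hd - d\gamma h = \gamma(\Id + hd + dh) = \gamma\imath\pi = 0$. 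Your separate attempt to verify that $\tilde f$ is an AR-morphism is unnecessary: once $k\tilde f = \tilde f h$ holds, the AR-identity follows automatically. The paper also first derives the alternative form $\tilde f = f + dkf - fdh$ (from $dkd = dip - d$), which gives (2) in one line and streamlines (3).

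There is, however, a genuine gap in your treatment of (3). The statement ``commutes with compositions'' means $\widetilde{gf} = \tilde g\,\tilde f$ for arbitrary \emph{AR}-morphisms $f\colon N\to B$ and $g\colon B\to Q$, not merely when one factor is already a morphism of contractions. Your argument explicitly invokes $k''g = gk$ (respectively $hg = gh'$), which is precisely the contraction-morphism hypothesis you are not entitled to assume. The paper handles the general case by a direct expansion using the alternative form: with homotopies $h,k,l$ on source, middle, target one computes
\[
\tilde g\,\tilde f = (g + dlg - gdk)(f + dkf - fdh)
\]
and simplifies, using only that $f,g$ are chain maps together with $dkdk=-dk$ and $kdkd=-kd$, to obtain $gf + dlgf - gfdh = \widetilde{gf}$. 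No appeal to $kf=fh$ or $lg=gk$ is needed. The weaker version you prove does happen to suffice for the applications later in the paper, but it is not what the lemma asserts.
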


\begin{proof} We first notice that, since $dkd=dip-d$ we have 
\[  dkfhd=dkf(\imath\pi-Id-dh)=dkipf-dkf-dkdfh=-dkf-dipfh+dfh=fdh-dkf,\]
and then $\tilde{f}=f-dkfhd=f+dkf-fdh$. In particular $\tilde{f}=f$ whenever $f$ is already a morphism of contractions.

It is clear that $d\tilde{f}=\tilde{f}d$, i.e., $\tilde{f}$ is a morphism of complexes, and that the morphism $f-\tilde{f}=dkfhd=d(kfhd)+(kfhd)d$ is homotopic to $0$. Since $hdh=-h$ and $kdk=-k$ we have 
\[ k\tilde{f}-\tilde{f}h=kf-fh-kdkfhd+dkfhdh=kf-fh+kfhd-dkfh\,.\]
Denoting $\gamma=kf-fh$, since $h\imath\pi=0$ and $kip=0$  we have $\gamma \imath\pi=kf\imath\pi=kip f=0$ and 
\[\begin{split} 
d\gamma+\gamma d&=dkf+kfd-dfh-fhd=(dk+kd)f-f(dh+hd)\\
&=(ip-I)f-f(\imath\pi-I)=0\,.\end{split}\]
This implies that $\tilde{f}$ is a morphism of contractions since 
\[\begin{split} k\tilde{f}-\tilde{f}h&=\gamma+kfhd-dkfh=\gamma+(\gamma+fh)hd-d(\gamma+fh)h=\gamma+\gamma hd-d\gamma h\\
&=\gamma (I+hd+dh)=\gamma \imath\pi=0\,.\end{split}\]
If   $(\xymatrix{M\ar@<.4ex>[r]^\imath&N\ar@<.4ex>[l]^\pi}, h)$, $(\xymatrix{A\ar@<.4ex>[r]^i&B\ar@<.4ex>[l]^p}, k)$ and $(\xymatrix{P\ar@<.4ex>[r]^j&Q\ar@<.4ex>[l]^q}, l)$ are contractions and $f\colon N\to B$, $g\colon B\to Q$ are morphisms of acyclic retractions we have 
\[ \begin{split}\tilde{g}\tilde{f}&=(g+dlg-gdk)(f+dkf-fdh)=
gf-gdkfhd-dlgkdf\\
&=gf-g(fdh-dkf)-(gdk-dlg)f=
gf-gfdh+dlgf=\widetilde{gf}\,.\end{split}\]
\end{proof}

\bigskip
\section{The projective model structure on acyclic retractions}

In this section we provide the proof of  Theorem~\ref{thm.modelstructureAR}.
We first observe that the properties MC1 and MC2  of Definition~\ref{def.ModelStructure} follow immediately from the model structure on $\mathbf{coCh}(R)$. 

As regards MC3, denote every acyclic retraction $\xymatrix{M\ar@<.4ex>[r]^\imath&N\ar@<.4ex>[l]^\pi}$ by 
the quadruple $(M,N,\imath,\pi)$, and 
consider a commutative diagram of solid arrows in $\AR(R)$:
 \[ \xymatrix{
 (M_1,N_1,\imath_1,\pi_1) \ar[r]^f \ar[d]_i & (M_3,N_3,\imath_3,\pi_3)\ar[d]^p \\ 
 (M_2,N_2,\imath_2,\pi_2) \ar[r]_g \ar@{.>}[ru]^h & (M_4,N_4,\imath_4,\pi_4)
 }\]
 where $i$ is a cofibration (resp.: trivial cofibration) and $p$ is a trivial fibration (resp.: fibration). The model structure on $\mathbf{coCh}(R)$ ensures the existence of a morphism of cochain complexes $h \colon N_2 \to N_3$ such that $hi=f$ and $ph=g$. 
Denoting  
\[\hat{h}=h-\imath_3\pi_3h-h\imath_2\pi_2+2\imath_3\pi_3h\imath_2\pi_2 \colon 
(M_2,N_2,\imath_2,\pi_2) \to (M_3,N_3,\imath_3,\pi_3),\] 
by the first basic trick~\ref{lem.trick2}, 
the map $\hat{h}$ is a morphism in $\AR(R)$. 
Moreover, since $i,p,f,g$ are morphisms or acyclic retractions, again by Lemma~\ref{lem.trick2} 
we have $\hat{h}i=\widehat{hi}=\hat{f}=f$ and $p\hat{h}=\widehat{ph}=\hat{g}=g$.
and then $\hat{h}$ is the required lifting in the category $\AR(R)$.

Finally, properties MC4 follows from the following two propositions.

\begin{proposition}\label{prop.factorizationAR}
There exists a functorial factorization  
\[(C,FW)\colon \Map(\AR(R)) \to \Map(\AR(R))\times \Map(\AR(R))\]
such that $C(f)$ is a cofibration and $FW(f)$ is a trivial fibration for every morphism $f$.
\end{proposition}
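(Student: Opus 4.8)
The plan is to lift the functorial factorization $(C, FW)$ from $\mathbf{coCh}(R)$ to $\mathbf{AR}(R)$ by applying it degreewise to the complex $N$ and then using the first basic trick to promote the resulting maps to morphisms of acyclic retractions. Concretely, given an acyclic retraction $(M, N, \imath, \pi)$, consider the morphism $\imath\pi \colon N \to N$; but more to the point, given a morphism $f \colon (M_1, N_1, \imath_1, \pi_1) \to (M_2, N_2, \imath_2, \pi_2)$ in $\mathbf{AR}(R)$, factor the underlying map $\beta(f) \colon N_1 \to N_2$ in $\mathbf{coCh}(R)$ as $N_1 \xrightarrow{C(\beta f)} W \xrightarrow{FW(\beta f)} N_2$, with $C(\beta f)$ a cofibration and $FW(\beta f)$ a trivial fibration. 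The task is then to equip $W$ with the structure of an acyclic retraction $(\ast, W, \jmath, \rho)$ so that both maps in the factorization become morphisms in $\mathbf{AR}(R)$, and so that the whole assignment is functorial.

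The key idea for the retraction structure on $W$ is to transport it along the factorization using functoriality of $C$ and $FW$ themselves. First I would note that the maps $\imath_1\pi_1 \colon N_1 \to N_1$ and $\imath_2\pi_2 \colon N_2 \to N_2$, together with $\beta(f)$, form a commutative square in $\mathbf{coCh}(R)$ (this is exactly the statement that $f$ is a morphism of AR-data, written out as $\beta(f)\,\imath_1\pi_1 = \imath_2\pi_2\,\beta(f)$); more useful still, $(N_1, N_1, \imath_1, \pi_1) \xrightarrow{\ \imath_1\pi_1\ } \cdots$ is idempotent. Applying the functor $\Map(C, FW)$ to the endomorphism-square given by $\imath_i\pi_i$ on each side produces an induced endomorphism $e \colon W \to W$ with $e = C(\imath_2\pi_2)\cdots$; because $C$ and $FW$ are functors and $(\imath_i\pi_i)^2 = \imath_i\pi_i$, this $e$ is idempotent and compatible with the factorization maps. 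Setting $\rho \colon W \to \operatorname{im}$-type object and $\jmath$ appropriately — or, more simply, invoking the first basic trick~\ref{lem.trick2} to replace the naive induced maps by genuine AR-morphisms — one obtains the desired retraction datum on $W$. The point of Lemma~\ref{lem.trick2} is precisely that an arbitrary chain map between the $N$-components of two AR-data can be corrected, in a way that commutes with composition, to a morphism of AR-data; so even if the transported structure maps $\jmath, \rho$ are not literally split, the corrected versions $\widehat{C(\beta f)}$ and $\widehat{FW(\beta f)}$ are morphisms in $\mathbf{AR}(R)$, and part (1) of that lemma guarantees the correction is invisible in cohomology, hence preserves the quasi-isomorphism conditions required of $\jmath, \rho$ and of $FW(f)$ being a weak equivalence.

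The main obstacle I anticipate is twofold. First, one must check that the object $W$ genuinely carries an acyclic retraction structure, i.e. that the transported $\jmath \colon \ast \to W$ and $\rho \colon W \to \ast$ are quasi-isomorphisms with $\rho\jmath = \Id$; the splitting identity $\rho\jmath = \Id$ should follow from idempotency of the induced $e$ together with functoriality, while the quasi-isomorphism property should follow because $C(\beta f)$ and $FW(\beta f)$ are quasi-isomorphisms onto/from their targets and $M_i \to N_i$ already are. Second, and more delicately, one must verify that $FW(f)$ — the second map — is not merely a fibration but a \emph{trivial} fibration in $\mathbf{AR}(R)$, which by definition of the model structure means $\beta(FW(f)) = FW(\beta f)$ is a trivial fibration in $\mathbf{coCh}(R)$; this is automatic from the choice of factorization, but one must make sure the first-basic-trick correction does not spoil it, which is exactly what part (1) of Lemma~\ref{lem.trick2} provides (the correction term induces zero in cohomology, so being a quasi-isomorphism is preserved, and degreewise surjectivity is preserved because the correction is built from the already-surjective $FW(\beta f)$ composed with projections). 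Functoriality of the whole construction is then inherited from functoriality of $(C, FW)$ on $\mathbf{coCh}(R)$ and from part (3) of Lemma~\ref{lem.trick2}.
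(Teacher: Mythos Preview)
Your core idea---factor the underlying chain map $N_1\to N_2$ in $\coCh(R)$ and transport the idempotents $\imath_i\pi_i$ through the functorial factorization to obtain an idempotent $e\colon W\to W$---is viable and genuinely different from what the paper does. If you set $W'=\operatorname{im}(e)$ with the evident inclusion $\jmath$ and projection $\rho$, then $(W',W,\jmath,\rho)$ is an acyclic retraction ($e$ is a quasi-isomorphism by 2-of-3 applied to $FW(\beta f)\circ e=\imath_2\pi_2\circ FW(\beta f)$), and the factorization maps $C(\beta f)$, $FW(\beta f)$ are AR-morphisms \emph{on the nose}, by the very functoriality equations defining $e$. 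No correction is needed.

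The gap is that you drift away from this and write ``or, more simply, invoking the first basic trick''. Lemma~\ref{lem.trick2} corrects a chain map between two \emph{given} AR-data; it cannot manufacture the AR-structure on $W$, which is what you need first. Two of the checks you then sketch are also wrong: $C(\beta f)$ is merely a cofibration, not a quasi-isomorphism, so it cannot be used to deduce that $\jmath,\rho$ are quasi-isomorphisms; and the correction $\hat f=f-ipf-f\imath\pi+2ipf\imath\pi$ is a linear combination, not a post-composition of $f$, so there is no reason it preserves degreewise surjectivity. These worries evaporate once you commit to the image-of-idempotent construction and drop the basic-trick detour, but as written the argument does not go through.

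For comparison, the paper takes a rather different route: it factors the \emph{small} map $\hat f\colon A\to M$ as $A\xrightarrow{g}P\xrightarrow{h}M$, forms the pushout $P\amalg_A B$ and the pullback $N\times_M P$, gets a canonical comparison $\phi\colon P\amalg_A B\to N\times_M P$, and factors $\phi$ once more to produce the middle object $Q$ with AR-structure $(P,Q,\gamma\bar\imath,\bar p\delta)$ read off from the universal maps. This is more elaborate (two factorizations plus a pushout and a pullback) but entirely diagrammatic; your idempotent approach, once cleaned up, is shorter.
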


\begin{proof} Consider a morphism $f$ in $\AR(R)$ represented by the commutative diagram 
\[
\xymatrix{
A \ar@/^1.2pc/[rr]^{\Id_A} \ar[r]^{\imath} \ar[d]_{\hat{f}}&B\ar[r]^{\pi} \ar[d]^f & A \ar[d]^{\hat{f}}\\
M \ar@/_1.2pc/[rr]_{\Id_M}\ar[r]_i & N \ar[r]_p & M
}\]
and let $A \xra{g} P \xra{h} M$ be the functorial factorization of 
$\hat{f}\colon A \to M$ in the model category $\mathbf{coCh}(R)$, with  $g$ a cofibration and 
$h$ a trivial fibration. Consider now the pushout $P \amalg_A B$ of $P\xla{g} A \xra{\imath}B$ and the pullback $N \times_MP$ of $N \xra{\;p\;} M \xla{\;h\;} P$. Then we have a commutative  diagram
\[\xymatrix{
A \ar@/^1pc/[rrr]^{\Id_A} \ar[r]_{\imath} \ar@/_1pc/[ddd]_{\hat{f}}\ar[d]^{g}  & B \ar[dddr]|{\hole }^(.35){f} \ar[rr]_{\pi}\ar[d]_{\overline{g}} &  & A \ar@/^1pc/[ddd]^{\hat{f}} \ar[dd]_{g} \\
P\ar[dd]^{h} \ar[rrrd]|{\ }_(.35){\Id_P} \ar[r]^{\overline{\imath}} & P \amalg_A B & && &\\
& &  N\times_M P \ar[r]_{\overline{p}} \ar[d]^{\overline{h}} & P \ar[d]_{h}& \\
M \ar@/_1pc/[rrr]_{\Id_M} \ar[rr]^i  & & N \ar[r]^p & M&}\]
where $\bar{g}$ is a cofibration and $\bar{h}$ is a trivial fibration.
By the universal property of coproducts, there exists a unique morphism 
$\psi_1\colon P \amalg_A B \to P$ such that $g \pi =\psi_1 \overline{g}$ and $\psi_1 \overline{\imath} = \Id_P$. Similarly, there exists a unique morphism   $\psi_2\colon P\amalg_A B \to N$ such that $\psi_2 \overline{g} = f$ and $\psi_2 \overline{\imath} =i h$.
By the universal property of products, there exists a unique morphism $\phi\colon P \amalg_A B \to N\times_MP$ such that $\overline{p} \phi = \psi_1$ and $\overline{h} \phi = \psi_2$. The above diagram becomes:
\[\xymatrix{
A \ar@/^1pc/[rrr]^{\Id_A} \ar[r]_{\imath} \ar@/_1pc/[ddd]_{\hat{f}}\ar[d]^{g}  & B \ar[rr]_{\pi}\ar[d]_{\overline{g}}  & & A \ar@/^1pc/[ddd]^{\hat{f}} \ar[dd]_{g} \\
P\ar[dd]^{h} \ar[r]^{\overline{\imath}} & P \amalg_A B \ar@{-->}[rd]_{\phi} \ar@/^/@{.>}[rrd]^{\psi_1} \ar@/_/@{.>}[ddr]_{\psi_2}&  &\\
&  &N\times_M P \ar[r]_{\overline{p}} \ar[d]^{\overline{h}} & P \ar[d]_{h}& \\
M \ar@/_1pc/[rrr]_{\Id_M} \ar[rr]^i && N \ar[r]^p & M&}\]
Let $P\amalg_A B \xra{\gamma} Q \xra{\delta} N\times_M P$ be the functorial  factorization of 
$\phi$, with $\gamma$ a cofibration and and $\delta$ a trivial fibration. Thus we have a commutative diagram
\[\xymatrix{
A \ar@/^1pc/[rrrr]^{\Id_A} \ar[r]_{\imath} \ar@/_1pc/[dddd]_{\hat{f}}\ar[d]^{g}  & B\ar@/^/@{.>}[rdd]^{\gamma \overline{g}} \ar[rrr]_{\pi}\ar[d]_{\overline{g}} & & & A \ar@/^1pc/[dddd]^{\hat{f}} \ar[ddd]_{g} \\
P\ar@/_/@{-->}[rrd]_{\gamma \overline{\imath}}\ar[ddd]^{h} \ar[r]^{\overline{\imath}} & P \amalg_A B \ar[rd]^{\gamma}& & &\\
& & Q \ar[rd]^{\delta} \ar@/_/@{.>}[rdd]_{\overline{h} \delta} \ar@/^/@{-->}[rrd]^{\overline{p}\delta}&& & \\
& & &N\times_M P \ar[r]_{\overline{p}} \ar[d]^{\overline{h}} & P \ar[d]_{h}& \\
M \ar@/_1pc/[rrrr]_{\Id_M} \ar[rrr]^i & && N \ar[r]^p & M&}\]
which reduces to 
\begin{equation}\label{equ.diagrammatondo}
\begin{matrix}
\xymatrix{ A \ar@/_1.2pc/[dd]_{\hat{f}} \ar[d]^{g} \ar[r]_{\imath} \ar@/^1.2pc/[rr]^{\Id_A} & B \ar[d]|{\gamma \overline{g}} \ar[r]_{\pi} & A \ar[d]_{g} \ar@/^1.2pc/[dd]^{\hat{f}} \\
P\ar[r]|{\ \gamma \overline{\imath}\ } \ar[d]^{h} & Q \ar[r]|{\ \overline{p} \delta\ } \ar[d]|{\overline{h} \delta} & P \ar[d]_{h}\\
M\ar@/_1.2pc/[rr]_{\Id_M}\ar[r]^{i} & N \ar[r]^p & M}\end{matrix}\quad.
\end{equation}

Since the construction of Diagram~\eqref{equ.diagrammatondo} is clearly functorial in $\Map(\AR(R))$, in order to conclude the proof it is sufficient to prove that the middle row 
is an acyclic retraction and the middle column is a factorization of $f$ with $\gamma\bar{g}$ a cofibration and $\bar{h}\delta$ a trivial fibration. All of these properties are true because: 
\begin{itemize}
 
 \item $\overline{p}\delta \gamma \overline{\imath}= \Id_P$ by construction.
 
 \item $\delta$ is a trivial fibration by construction and $\bar{p},\bar{h}$ are the pull-backs of the trivial fibrations $p,h$. Hence  $\overline{p} \delta, \bar{h}\delta$,  are trivial fibrations and   
$\gamma \overline{\imath}$ is a weak equivalence by the 2 of 3 property. 
 
\item $\gamma$ is a cofibration by construction and $\bar{g}$ is the push-out of the cofibration $g$.

\end{itemize}
\end{proof}

\begin{proposition}
There exists a functorial factorization  
\[(CW,F)\colon \Map(\AR(R)) \to \Map(\AR(R))\times \Map(\AR(R))\]
such that $CW(f)$ is a trivial cofibration and $F(f)$ is a fibration for every morphism $f$.
\end{proposition}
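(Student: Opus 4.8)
The plan is to repeat the construction of Proposition~\ref{prop.factorizationAR} essentially verbatim, simply replacing, at the two places where a map of cochain complexes is factored in $\coCh(R)$, the factorization (cofibration, trivial fibration) by the factorization (trivial cofibration, fibration). Thus, starting from a morphism $f$ of acyclic retractions presented, as in the proof of Proposition~\ref{prop.factorizationAR}, by a commutative diagram with $f\colon B\to N$ and $\hat f=pf\imath\colon A\to M$, I would first take the functorial factorization $A\xra{g}P\xra{h}M$ of $\hat f$ with $g$ a trivial cofibration and $h$ a fibration; then form the push-out $P\amalg_AB$ of $P\xla{g}A\xra{\imath}B$ and the pull-back $N\times_MP$ of $N\xra{p}M\xla{h}P$; build the canonical morphisms $\psi_1\colon P\amalg_AB\to P$ with $\psi_1\overline{g}=g\pi$ and $\psi_1\overline{\imath}=\Id_P$, the morphism $\psi_2\colon P\amalg_AB\to N$ with $\psi_2\overline{g}=f$ and $\psi_2\overline{\imath}=ih$, and $\phi\colon P\amalg_AB\to N\times_MP$ with $\overline{p}\phi=\psi_1$ and $\overline{h}\phi=\psi_2$ (these are well defined exactly as in the proof of Proposition~\ref{prop.factorizationAR}); and finally take the functorial factorization $P\amalg_AB\xra{\gamma}Q\xra{\delta}N\times_MP$ of $\phi$ with $\gamma$ a trivial cofibration and $\delta$ a fibration. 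One obtains the same Diagram~\eqref{equ.diagrammatondo}, now with middle column $B\xra{\gamma\overline{g}}Q\xra{\overline{h}\delta}N$ and middle row $P\xra{\gamma\overline{\imath}}Q\xra{\overline{p}\delta}P$; functoriality in $\Map(\AR(R))$ is clear, since everything is built from functorial factorizations in $\coCh(R)$ together with push-outs and pull-backs.

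The class assignments then follow from the stability properties recalled in Section~\ref{sec.modelcategories}: as $g$ is a trivial cofibration, its push-out $\overline{g}$ is a trivial cofibration, hence $\gamma\overline{g}$ is a trivial cofibration; as $h$ is a fibration, its pull-back $\overline{h}$ is a fibration, hence $\overline{h}\delta$ is a fibration. The identities $\overline{h}\delta\,\gamma\overline{g}=\psi_2\overline{g}=f$ and $hg=\hat f$ show that the middle column is a factorization of $f$. Therefore $CW(f)$, the morphism given by $g$ on the $M$-component and $\gamma\overline{g}$ on the $N$-component, is a trivial cofibration in $\AR(R)$, and $F(f)$, given by $h$ and $\overline{h}\delta$, is a fibration, so that MC4 will be established once we check that the middle row is an acyclic retraction.

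This last verification is the only step requiring an argument genuinely different from Proposition~\ref{prop.factorizationAR}, and it is the point I expect to be the main (if minor) obstacle: we must show that $\gamma\overline{\imath}$ and $\overline{p}\delta$ are quasi-isomorphisms, which in the previous proposition was immediate because $\overline{p}\delta$ was there a composite of trivial fibrations, whereas now $\delta$ is only a fibration. I would instead argue from the cofibration side. Since $g$ is a trivial cofibration, its push-out $\overline{g}\colon B\to P\amalg_AB$ is a trivial cofibration, in particular a quasi-isomorphism; since $\imath\colon A\to B$ is a quasi-isomorphism, the composite $\overline{g}\imath=\overline{\imath}g$ is a quasi-isomorphism, and since $g$ is a quasi-isomorphism the $2$-out-of-$3$ property forces $\overline{\imath}$ to be a quasi-isomorphism. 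Hence $\gamma\overline{\imath}$ is a quasi-isomorphism, and applying $2$-out-of-$3$ to $\overline{p}\delta\,\gamma\overline{\imath}=\psi_1\overline{\imath}=\Id_P$ shows that $\overline{p}\delta$ is a quasi-isomorphism as well; together with $\overline{p}\delta\,\gamma\overline{\imath}=\Id_P$ this exhibits $(P,Q,\gamma\overline{\imath},\overline{p}\delta)$ as AR-data and completes the proof.
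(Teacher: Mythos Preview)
Your proposal is correct and is precisely the ``mutatis mutandis'' the paper has in mind: swap the two functorial factorizations in $\coCh(R)$ to (trivial cofibration, fibration), and then verify the same diagram works. You have also correctly isolated the one place where the adaptation is not purely mechanical---showing that the middle row is an acyclic retraction---and your argument via the pushout square $\bar g\,\imath=\bar\imath\, g$, the fact that $\bar g$ is a trivial cofibration, and $2$-out-of-$3$ is exactly the right replacement for the trivial-fibration argument used in Proposition~\ref{prop.factorizationAR}. One cosmetic remark: in $\AR(R)$ a morphism is by definition just the map on the ``big'' complex, so $CW(f)=\gamma\bar g$ and $F(f)=\bar h\delta$; the map $g$ on the retracts is the induced $\widehat{CW(f)}$, not an independent datum.
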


\begin{proof} Same proof, mutatis mutandis, of Proposition~\ref{prop.factorizationAR}.\end{proof}


\bigskip
\section{The projective model structure on contractions}

In this section we provide the proof of  Theorem~\ref{thm.modelstructureContr}: as in the previous section we notice that  the properties MC1 and MC2  of Definition~\ref{def.ModelStructure} follow immediately from the model structure on $\mathbf{coCh}(R)$. 

In order to prove the lifting property MC3 we shall denote every contraction as a pair $(X,h)$, where 
$X$ is an acyclic retraction and $h$ is a homotopy related to $X$ as in Definition~\ref{def.contraction}.
Consider the following commutative diagram of solid arrow in $\Contr(R)$:
 \[ \xymatrix{
 (A, \alpha ) \ar[r]^f \ar[d]_i & (C, \gamma )\ar[d]^p \\ (B, \beta) \ar[r]_g \ar@{.>}[ru]^h & (E, \delta )
 }\]
where $i$ is a cofibration (resp.: trivial cofibration) and $p$ is a trivial fibration (resp.: fibration). According to Theorem~\ref{thm.modelstructureAR}  there exists a morphism $h \colon B \to C$ of acyclic retractions such that $hi=f$ and $ph=g$. By Lemma~\ref{lem.trick3} the morphism 
$\tilde{h}=h-d\gamma h \beta d   \colon (B,\beta) \to (C,\gamma)$ is a morphism of contractions and  
$\tilde{h}i=\widetilde{hi}=\widetilde{f}=f$ and 
$p\tilde{h}=\widetilde{ph}=\tilde{g}=g$. This proves property MC3 and the remaining part of this section is devoted to the proof of the factorization property MC4.

\begin{definition}\label{def.PathObject} The \emph{path object} functor $P\colon\mathbf{coCh}(R)\to \mathbf{coCh}(R)$ is defined in the following way: for every cochain complex $B=\{B^i\}$ we have 
$P(B)^i=B^i\oplus B^i\oplus B^{i-1}$ and the differential is defined by the formula 
\[\delta\colon P(B)^i\to P(B)^{i+1},\qquad \delta(a,b,c)=(da,db,a-b-dc)\,.\]
\end{definition}

Given $A,B\in \coCh(R)$, $f,g\in \Hom^0_R(A,B)$ and 
$h\in \Hom^{-1}_R(A,B)$, the linear map
\[ A\to P(B),\qquad a\mapsto (f(a),g(a),h(a)),\]
is a morphism of complexes if and only if  $f$ and $g$ are morphisms of complexes and
$f-g=dh+hd$. It follows that the datum $(\xymatrix{M\ar@<.4ex>[r]^\imath&N\ar@<.4ex>[l]^\pi}, h)$ 
is a strong deformation retraction if and only if $\xymatrix{M\ar@<.4ex>[r]^\imath&N\ar@<.4ex>[l]^\pi}$ is an acyclic retraction and 
\[ N\to P(N),\qquad x\mapsto (\imath\pi(x),x,h(x))\]
is a morphism of cochain complexes.

\begin{lemma}\label{lem.PathObject} 
For every cochain complex $B$, the natural projection $P(B)\to B\oplus B$ is  surjective  and the inclusion 
\[ B\to P(B),\qquad b\mapsto (b,b,0),\]
is a quasi-isomorphism.
Moreover if $0\to C\to Q\to N\to 0$ is a short exact sequence of cochain complexes, then 
\[ 0\to C[-1]\to P(Q)\to (Q\oplus Q)\times_{N\oplus N}P(N)\to 0\]
is an exact sequence, where $C[-1]$ is the cochain complex $C$ with the degrees shifted by $1$.
\end{lemma}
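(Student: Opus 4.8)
The plan is to verify the three assertions of Lemma~\ref{lem.PathObject} separately, each by a direct computation using Definition~\ref{def.PathObject}.

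First, for the natural projection $P(B)\to B\oplus B$, $(a,b,c)\mapsto(a,b)$, surjectivity is immediate since $(a,b,0)$ maps to $(a,b)$ for any $a,b\in B^i$. To see that the inclusion $B\to P(B)$, $b\mapsto(b,b,0)$, is a quasi-isomorphism, I would compute the kernel of the projection: it consists of the triples $(0,0,c)$, which form the subcomplex $B[-1]$ (with differential $\delta(0,0,c)=(0,0,-dc)$, matching the shifted sign convention), so we have a short exact sequence $0\to B[-1]\to P(B)\to B\oplus B\to 0$. The composite $B\to P(B)\to B\oplus B$ is the diagonal $b\mapsto(b,b)$. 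A cleaner route is to exhibit an explicit contracting homotopy for the complement: define $\imath\colon B\to P(B)$ as above, $\pi\colon P(B)\to B$ by $(a,b,c)\mapsto a$, and $h\colon P(B)\to P(B)[-1]$ by $h(a,b,c)=(0,0,0)$ adjusted so that $\imath\pi-\Id=\delta h+h\delta$; in fact taking $h(a,b,c)=(0,-c,0)$ one checks $\delta h(a,b,c)+h\delta(a,b,c)=(0,a-b-dc,*)+(0,\text{terms})$ — the point is only that such an $h$ exists, making $(\xymatrix{B\ar@<.4ex>[r]^\imath&P(B)\ar@<.4ex>[l]^\pi})$ a deformation retraction and hence $\imath$ a quasi-isomorphism. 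I would write out the correct homotopy and check the two identities $\pi\imath=\Id_B$ and $\imath\pi-\Id_{P(B)}=\delta h+h\delta$.

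Second, for the exact sequence associated to $0\to C\to Q\xra{q} N\to 0$, I would describe the fiber product $(Q\oplus Q)\times_{N\oplus N}P(N)$ concretely: in degree $i$ it consists of tuples $(x,y,a,b,c)$ with $x,y\in Q^i$, $a,b\in N^i$, $c\in N^{i-1}$ subject to $q(x)=a$ and $q(y)=b$; thus it is isomorphic to $\{(x,y,c): x,y\in Q^i,\ c\in N^{i-1}\}$ with differential inherited from $P(N)$ in the third slot. The natural map $P(Q)\to(Q\oplus Q)\times_{N\oplus N}P(N)$ sends $(x,y,c)\in P(Q)^i$ (so $c\in Q^{i-1}$) to $(x,y,q(c))$. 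This is surjective since $q$ is surjective in each degree, and its kernel is $\{(0,0,c): c\in\ker q\}=\{(0,0,c): c\in C^{i-1}\}$, which is exactly $C[-1]$; the differential on this kernel, read off from $\delta(0,0,c)=(0,0,-dc)$, is the shifted differential. I would also check that all these maps are morphisms of complexes, which is routine from the formula for $\delta$.

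I do not expect a serious obstacle here: the lemma is a bookkeeping statement about an explicitly defined functor. The only mildly delicate point is getting the sign and degree conventions for $C[-1]$ to match the formula $\delta(a,b,c)=(da,db,a-b-dc)$ — one must confirm that the induced differential on the triples $(0,0,c)$ is indeed $-d$ (equivalently the standard shift differential), and that the quasi-isomorphism claim in the first part is genuinely witnessed by a deformation retraction rather than merely asserted. Everything else is a direct unwinding of the definitions of pushout, pullback, and $P(-)$.
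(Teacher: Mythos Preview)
Your proposal is correct and is exactly the direct verification the paper leaves implicit (the paper's proof reads simply ``Easy and straightforward''). For the record, the homotopy you are looking for is $h(a,b,c)=(0,c,0)$: with $\pi(a,b,c)=a$ one gets $\delta h(a,b,c)+h\delta(a,b,c)=(0,dc,-c)+(0,a-b-dc,0)=(0,a-b,-c)=\imath\pi(a,b,c)-(a,b,c)$, and your description of the kernel of $P(Q)\to (Q\oplus Q)\times_{N\oplus N}P(N)$ as $C[-1]$ is correct on the nose.
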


\begin{proof} Easy and straightforward.\end{proof}

\begin{lemma}\label{lem.factorizationSDR}
Let $f\colon (\xymatrix{A\ar@<.4ex>[r]^i&B\ar@<.4ex>[l]^p}, k)\to (\xymatrix{M\ar@<.4ex>[r]^\imath&N\ar@<.4ex>[l]^\pi}, h)$ be a morphism of contractions and let 
\[  (\xymatrix{A\ar@<.4ex>[r]^i&B\ar@<.4ex>[l]^p})\xrightarrow{\;\alpha\;}
(\xymatrix{P\ar@<.4ex>[r]^j&Q\ar@<.4ex>[l]^q})\xrightarrow{\;\beta\;} 
(\xymatrix{M\ar@<.4ex>[r]^\imath&N\ar@<.4ex>[l]^\pi})\]
a factorization of $f$ in the model category $\AR(R)$ such that $\alpha$ is a cofibration and $\beta$ is a fibration. If either $\alpha$ or $\beta$ is a weak equivalence, then there exists a homotopy $l\in \Hom_R^{-1}(Q,Q)$ such that 
\begin{equation}\label{equ.factorizationdebole}  
(\xymatrix{A\ar@<.4ex>[r]^i&B\ar@<.4ex>[l]^p},k)\xrightarrow{\;\alpha\;}
(\xymatrix{P\ar@<.4ex>[r]^j&Q\ar@<.4ex>[l]^q},l)\xrightarrow{\;\beta\;} 
(\xymatrix{M\ar@<.4ex>[r]^\imath&N\ar@<.4ex>[l]^\pi},h)\end{equation}
is a factorization in $\mathbf{Contr}(R)$.
\end{lemma}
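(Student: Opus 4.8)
The plan is to reformulate the existence of the homotopy $l$ as a lifting problem against the path-object functor $P$ of Definition~\ref{def.PathObject}. Write $a\colon B\to Q$ and $b\colon Q\to N$ for the morphisms of cochain complexes underlying $\alpha$ and $\beta$, so that $ba=f$. By the observation following Definition~\ref{def.PathObject}, giving $l\in\Hom_R^{-1}(Q,Q)$ that turns $(\xymatrix{P\ar@<.4ex>[r]^j&Q\ar@<.4ex>[l]^q},l)$ into a strong deformation retraction is the same as giving a morphism of cochain complexes $\sigma\colon Q\to P(Q)$ of the shape $\sigma(x)=(jq(x),x,l(x))$; and, unwinding the functoriality of $P$, such an $l$ makes $\alpha$ (resp.\ $\beta$) a morphism of strong deformation retractions precisely when $\sigma a=P(a)\sigma_B$ (resp.\ $P(b)\sigma=\sigma_N b$), where $\sigma_B(x)=(ip(x),x,k(x))$ and $\sigma_N(y)=(\imath\pi(y),y,h(y))$ are the structure maps of the given contractions $(B,k)$ and $(N,h)$. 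So I want to produce such a $\sigma$.

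First I would form the pullback $T:=(Q\oplus Q)\times_{N\oplus N}P(N)$ of $b\oplus b$ and the projection $P(N)\to N\oplus N$, and observe that $x\mapsto\bigl((jq(x),x),\sigma_N(b(x))\bigr)$ is a well-defined chain map $\rho\colon Q\to T$, the only point being that $b$ is a morphism of acyclic retractions, i.e.\ $b\,jq=\imath\pi\,b$. There is a natural morphism $\pi\colon P(Q)\to T$ built from the projection $P(Q)\to Q\oplus Q$ and $P(b)$, and the routine verification is that the square with top edge $P(a)\sigma_B\colon B\to P(Q)$, left edge $a\colon B\to Q$, bottom edge $\rho$, and right edge $\pi$ commutes; this is a direct check using that $a$ is a morphism of acyclic retractions and that $f=ba$ is a morphism of contractions (hence $f\,ip=\imath\pi\,f$ and $fk=hf$).

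Next I would exhibit $\pi$ as a fibration, indeed as a trivial fibration when $\ker b$ is acyclic: applying Lemma~\ref{lem.PathObject} to the short exact sequence $0\to\ker b\to Q\xrightarrow{b}N\to0$ — valid since $\beta$, being a fibration, makes $b$ a degreewise epimorphism — identifies $\pi$ as a degreewise epimorphism with kernel $(\ker b)[-1]$. Consequently, if $\beta$ is a weak equivalence then $b$ is a surjective quasi-isomorphism, $\ker b$ is acyclic, $\pi$ is a trivial fibration, and $a$ is a cofibration; if instead $\alpha$ is a weak equivalence then $a$ is a trivial cofibration and $\pi$ is a fibration. In either case MC3 in $\coCh(R)$ provides a lift $\lambda\colon Q\to P(Q)$ with $\lambda a=P(a)\sigma_B$ and $\pi\lambda=\rho$. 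Composing $\lambda$ with $P(Q)\to Q\oplus Q$ and comparing with $\rho$ forces $\lambda(x)=(jq(x),x,l(x))$ for some $l\in\Hom_R^{-1}(Q,Q)$, so $\lambda=\sigma$; the two equations satisfied by $\lambda$ then say exactly that $(\xymatrix{P\ar@<.4ex>[r]^j&Q\ar@<.4ex>[l]^q},l)$ is a strong deformation retraction and that $\alpha,\beta$ are morphisms of strong deformation retractions.

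To finish I would pass from this SDR-homotopy $l$ to a genuine contraction homotopy by the second basic trick (Lemma~\ref{lem.trick1}): replacing $l$ by $\tilde l$ leaves the underlying acyclic retraction unchanged, so \eqref{equ.factorizationdebole} remains a factorization of $f$ in $\AR(R)$, and since $k,h$ are already contraction homotopies one has $\tilde k=k$, $\tilde h=h$, whence the functoriality of the second basic trick yields $\tilde l\,a=a\tilde k=ak$ and $b\,\tilde l=\tilde h\,b=hb$; thus $\alpha\colon(B,k)\to(Q,\tilde l)$ and $\beta\colon(Q,\tilde l)\to(N,h)$ are morphisms of contractions, and \eqref{equ.factorizationdebole} is a factorization in $\mathbf{Contr}(R)$ after renaming $\tilde l$ as $l$. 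I expect the main obstacle to be setting up the lifting square correctly — in particular recognizing that both hypotheses (``$\alpha$ or $\beta$ a weak equivalence'') funnel into the single lifting of $a$ against $\pi$, exploiting that $\pi$ is always a fibration and is a trivial fibration exactly when $\ker\beta$ is acyclic — and performing the commutativity check for that square; everything after the lift is produced is bookkeeping through the second basic trick.
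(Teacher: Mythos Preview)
Your proposal is correct and follows essentially the same route as the paper: reduce to producing an SDR homotopy via the second basic trick, recast the existence of $l$ as a lifting of $\alpha$ against the map $P(Q)\to (Q\oplus Q)\times_{N\oplus N}P(N)$, and use Lemma~\ref{lem.PathObject} to see that this map is a fibration whose kernel is $(\ker\beta)[-1]$, hence trivial exactly when $\beta$ is. The only cosmetic difference is that the paper invokes the second basic trick at the outset while you apply it at the end, but the lifting square, the pullback $T$, and the dichotomy on which of $\alpha,\beta$ is a weak equivalence are identical.
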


\begin{proof} By the second basic trick it is sufficient to prove that there exists $l$ such that 
\eqref{equ.factorizationdebole} is a factorization of $f$ in the category of strong deformation retractions.
In the model category $\coCh(R)$ we have a commutative diagram of solid arrows  
\[\xymatrix{
B \ar[d]_{\alpha} \ar[rr]_{(ip,\Id_B)} \ar@/^1.5pc/[rrr]^{(ip,\Id_B,k)} & & B\oplus B 
\ar[d]^(.3){\alpha^{\oplus 2}} & P(B) \ar[d]^{P(\alpha)} \ar[l] \\
Q \ar[d]_{\beta} \ar[rr]_{(jq,\Id_Q)}\ar@{..>}@(ur,ul)[rrr] & & Q\oplus Q \ar[d]^{\beta^{\oplus 2}} & P(Q)\ar[l]\ar[d]^{P(\beta)} \\
N \ar[rr]^{(\imath\pi,\Id_N)} \ar@/_1.5pc/[rrr]_{(\imath\pi,\Id_N,h)} & & N\oplus N & P(N)\ar[l]}\]
and we want to prove that this diagram can be filled with the dotted arrow. This is equivalent to fill with a dotted arrow the solid commutative diagram 
\[ \begin{matrix} \xymatrix{
B \ar@/_2pc/[dddd] \ar[rr]_{(ip,\Id_B)} \ar@/^1pc/[rrr]^{(ip,\Id_B,k)}\ar[dd]_{\alpha} && B \oplus B \ar[dd]^(0.4){\alpha^{\oplus 2}}  & P(B) \ar@/^2pc/[dddd]|(.72){\hole}|(.74){\hole}|(.76){\hole}  \ar[l] \ar[d]_{P(\alpha)} \\
& & & P(Q)\ar[dl] \ar[dd]^{\gamma} \\
Q \ar[rr]^{(jq,\Id_Q)} \ar@/^1.5pc/@{.>}[rrru]^(0.3){\psi} \ar@/_1.5pc/[rrrd]_(0.3){(\phi_1,\phi_2)} \ar[dd]_{\beta} & & Q \oplus Q \ar[dd]^(0.6){\beta^{\oplus 2}} & \\
& & & P(N) \times_{N \oplus N} Q \oplus Q \ar[d] \ar[lu]\\
N \ar@/_1pc/[rrr]_{(\imath\pi,\Id_N,h)}\ar[rr]^{(\imath\pi,\Id_N)} & & N \oplus N & P(N) \ar[l]}
\end{matrix}\qquad
\begin{matrix}\psi=(jq,\Id_Q,l)\\
\\
\phi_1=(\imath\pi,\Id_N,h)\beta\\
\\
\phi_2=(jq,\Id_Q)\end{matrix}.
\]


By Lemma~\ref{lem.PathObject} the morphism $\gamma$ is a fibration that is trivial if and only if $\beta$ is a trivial fibration.  Therefore the dotted lifting $\psi$ exists either when $\alpha$ is a cofibration and $\beta$ a trivial fibration, or when $\alpha$ is a trivial cofibration and $\beta$ a fibration. 
\end{proof}

Finally, properties MC4 follows from the following two propositions.

\begin{proposition}\label{prop.factorizationContr}
There exists a functorial factorization  
\[(C,FW)\colon \Map(\mathbf{Contr}(R)) \to \Map(\mathbf{Contr}(R))\times \Map(\mathbf{Contr}(R))\] 
such that $C(f)$ is a cofibration and $FW(f)$ is a trivial fibration for every morphism $f$.
\end{proposition}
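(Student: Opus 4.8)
The plan is to reduce everything to the functorial factorization on $\mathbf{AR}(R)$ of Proposition~\ref{prop.factorizationAR} and to Lemma~\ref{lem.factorizationSDR}. Given a morphism $f\colon(X,k)\to(Y,h)$ in $\mathbf{Contr}(R)$, apply the forgetful functor $\alpha$ and then the functorial factorization $(C,FW)$ of Proposition~\ref{prop.factorizationAR} to $\alpha(f)\colon X\to Y$: this yields, naturally in $f$, a factorization $X\xrightarrow{C(f)}Z\xrightarrow{FW(f)}Y$ in $\mathbf{AR}(R)$ with $C(f)$ a cofibration and $FW(f)$ a trivial fibration. Since $FW(f)$ is in particular a weak equivalence, Lemma~\ref{lem.factorizationSDR} applies to the contraction morphism $f$ together with this $\mathbf{AR}(R)$-factorization (the case in which $\alpha=C(f)$ is a cofibration and $\beta=FW(f)$ is a trivial fibration), and produces a homotopy $l$ on the acyclic retraction $Z$ for which $(X,k)\xrightarrow{C(f)}(Z,l)\xrightarrow{FW(f)}(Y,h)$ is a factorization of $f$ in $\mathbf{Contr}(R)$. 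That $C(f)$ is still a cofibration and $FW(f)$ still a trivial fibration is automatic, since these notions in $\mathbf{Contr}(R)$ are by definition detected by $\beta\alpha$. This already establishes MC4 pointwise; the only thing that remains is to make the whole factorization functorial in $f$.

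This functoriality is the heart of the matter. On underlying acyclic retractions the assignments $f\mapsto C(f)$ and $f\mapsto FW(f)$ are functorial by Proposition~\ref{prop.factorizationAR}, and since $\beta\alpha$ is faithful it suffices to choose the homotopy $l=l(f)$ so that, for every morphism $\sigma\colon f\to f'$ in $\Map(\mathbf{Contr}(R))$, the induced map $Z(\sigma)\colon Z(f)\to Z(f')$ satisfies $Z(\sigma)\,l(f)=l(f')\,Z(\sigma)$. To arrange this I would render the lift used in Lemma~\ref{lem.factorizationSDR} itself functorial. Recall that there $l(f)$ is the degree $-1$ component of a lift $s_{Z(f)}=(\imath_{Z}\pi_{Z},\Id_{Z},l(f))\colon Z(f)\to P(Z(f))$ of the cofibration $C(f)$ against the trivial fibration $\gamma(f)\colon P(Z(f))\to (Z(f)\oplus Z(f))\times_{Y\oplus Y}P(Y)$; the target of $\gamma(f)$ depends functorially on $f$ because the path object $P$ and the limits in sight do. By Theorem~\ref{thm.ModelcoCh} the functorial factorization of $\mathbf{coCh}(R)$ exhibits $C(f)$, naturally in $f$, as a retract of a transfinite composition of cobase changes of generating cofibrations; one then builds $s_{Z(f)}$ along this presentation, one generating cofibration at a time, and checks that a morphism $\sigma$ of factorizations induces a morphism of the cellular presentations under which the successive choices can be made compatibly. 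Once the strong deformation homotopy $l(f)$ is obtained functorially, the second basic trick~\ref{lem.trick1} converts it into a contraction homotopy, and its functoriality (observed right after Lemma~\ref{lem.trick1}), together with that of the first basic trick~\ref{lem.trick2}, keeps $C(f)$ and $FW(f)$ morphisms of contractions, so that $(C,FW)$ becomes a genuine functorial factorization.

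I expect the main obstacle to be precisely the compatibility of the lifts in the previous paragraph: extending a lift over a single generating cofibration involves a choice, and these choices must be organized coherently across all morphisms $\sigma$, which requires a careful bookkeeping of the combinatorics underlying the small object argument of Theorem~\ref{thm.ModelcoCh}. Everything else — the existence of the factorization, its defining properties, and the passage from strong deformation retractions to contractions — is a direct application of Proposition~\ref{prop.factorizationAR}, Lemma~\ref{lem.factorizationSDR}, and the basic tricks.
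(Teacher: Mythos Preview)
Your pointwise construction is exactly the paper's: factor in $\mathbf{AR}(R)$ via Proposition~\ref{prop.factorizationAR}, then invoke Lemma~\ref{lem.factorizationSDR} to equip the middle term with a contraction homotopy $l$. Where you diverge is in the treatment of functoriality, and here you take a much harder route than necessary.

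You try to make the \emph{choice} of $l=l(f)$ itself functorial, by arranging the lifts inside Lemma~\ref{lem.factorizationSDR} coherently along a cellular presentation of $C(f)$. You correctly identify this as the main obstacle, but you do not actually carry it out: ``careful bookkeeping of the combinatorics underlying the small object argument'' is a promise, not an argument, and in general functorial lifts against a varying family of lifting problems require nontrivial machinery (e.g.\ an algebraic small object argument). So as written there is a genuine gap.

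The paper sidesteps this entirely using the third basic trick (Lemma~\ref{lem.trick3}). One picks $l=l(f)$ \emph{arbitrarily} for each $f$, with no functoriality in $l$ whatsoever. Given a square $\sigma\colon f_1\to f_2$ in $\Map(\mathbf{Contr}(R))$, the functoriality of the $\mathbf{AR}(R)$-factorization produces a morphism $\psi\colon (L_1,l_1)\to (L_2,l_2)$ of acyclic retractions; one then replaces $\psi$ by $\tilde{\psi}=\psi-d\,l_2\,\psi\,l_1\,d$, which by Lemma~\ref{lem.trick3} is a morphism of contractions, agrees with $\psi$ when $\psi$ already was one, and---crucially---satisfies $\widetilde{g\psi}=g\tilde{\psi}$ and $\widetilde{\psi g}=\tilde{\psi}g$ whenever $g$ is a contraction morphism, so the square with $C(f_i)$ and $FW(f_i)$ still commutes and composites are respected. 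This is the whole point of Lemma~\ref{lem.trick3}; neither the first nor the second basic trick is needed at this stage (your closing invocation of Lemma~\ref{lem.trick2} is misplaced: $C(f)$ and $FW(f)$ are already contraction morphisms by construction).

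In short: the existence part of your argument is fine and matches the paper; for functoriality, drop the attempt to make $l$ natural and instead correct the induced $\mathbf{AR}$-morphism on the middle term using Lemma~\ref{lem.trick3}.
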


\begin{proof} For every morphism of contractions $f\colon (K,k)\to (H,h)$, $K,H\in \AR(R)$, consider the functorial (C,FW)-factorization $K\xrightarrow{\alpha}L\xrightarrow{\beta}H$ in the model category $\AR(R)$ and chose a homotopy $l$ such that $(K,k)\xrightarrow{\alpha}(L,l)\xrightarrow{\beta}(H,h)$
is a (C,FW)-factorization: the existence of $l$ is provided by Lemma~\ref{lem.factorizationSDR}.
This  defines two functions $C,FW$ on the objects of $\Map(\mathbf{Contr}(R))$, namely 
$C(f)=\alpha$, $FW(f)=\beta$. Now every morphism $\phi$ in $\Map(\mathbf{Contr}(R))$ is given by a commutative square of contractions  
\[ \xymatrix{(K_1,k_1)\ar[d]_{f_1}\ar[r]^{\phi_1}&(K_2,k_2)\ar[d]^{f_2}\\
(H_1,h_1)\ar[r]^{\phi_2}&(H_2,k_2)}\]
which extends to a commutative diagram of acyclic retractions
\[ \xymatrix{(K_1,k_1)\ar[d]_{C(f_1)}\ar[r]^{\phi_1}&(K_2,k_2)\ar[d]^{C(f_2)}\\
(L_1,l_1)\ar[d]_{FW(f_1)}\ar[r]^{\psi}&(L_2,l_2)\ar[d]^{FW(f_2)}\\
(H_1,h_1)\ar[r]^{\phi_2}&(H_2,k_2)}\]
and it is sufficient to consider the morphism of contractions $\tilde{\psi}=\psi-dl_2\psi l_1d$, provided by Lemma~\ref{lem.trick3}, in order to have a functorial factorization in the category $\mathbf{Contr}(R)$.  
\end{proof}

\begin{proposition}
There exists a functorial factorization  
\[(CW,F)\colon \Map(\mathbf{Contr}(R)) \to \Map(\mathbf{Contr}(R))\times \Map(\mathbf{Contr}(R))\] 
such that $CW(f)$ is a trivial cofibration and $F(f)$ is a fibration for every morphism $f$.
\end{proposition}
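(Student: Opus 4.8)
The plan is to repeat, with the roles of cofibrations and trivial cofibrations interchanged, the argument used for Proposition~\ref{prop.factorizationContr}. First I would take a morphism of contractions $f\colon (K,k)\to (H,h)$, with $K,H\in \AR(R)$, and apply the functorial $(CW,F)$-factorization of the model category $\AR(R)$ (Theorem~\ref{thm.modelstructureAR}) to its underlying morphism of acyclic retractions, obtaining a factorization $K\xrightarrow{\alpha}L\xrightarrow{\beta}H$ with $\alpha$ a trivial cofibration and $\beta$ a fibration.

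Next I would invoke Lemma~\ref{lem.factorizationSDR}, which is stated precisely so as to cover both admissible lifting situations, and in particular the case at hand in which $\alpha$ is a trivial cofibration and $\beta$ a fibration. It supplies a homotopy $l$ for which $(K,k)\xrightarrow{\alpha}(L,l)\xrightarrow{\beta}(H,h)$ is a factorization in $\mathbf{Contr}(R)$. Since the underlying data in $\coCh(R)$ is unchanged, $\alpha$ is still a trivial cofibration and $\beta$ still a fibration in $\mathbf{Contr}(R)$, as required, and this defines the two functions $CW(f)=\alpha$, $F(f)=\beta$ on the objects of $\Map(\mathbf{Contr}(R))$.

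To promote this to a functor I would argue exactly as in Proposition~\ref{prop.factorizationContr}: a morphism in $\Map(\mathbf{Contr}(R))$ is a commutative square of contractions, which, after factoring its two vertical arrows as above, extends to a commutative ladder of acyclic retractions whose middle horizontal arrow $\psi\colon(L_1,l_1)\to(L_2,l_2)$ is a priori only a morphism of acyclic retractions. I would then replace $\psi$ by $\tilde\psi=\psi-dl_2\psi l_1d$, as in the third basic trick (Lemma~\ref{lem.trick3}); since the transformation $g\mapsto\tilde g$ commutes with composition and fixes morphisms of contractions, $\tilde\psi$ is a morphism of contractions fitting in the required commutative squares, and this yields the desired functorial factorization in $\mathbf{Contr}(R)$.

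I do not expect any genuinely new difficulty here. The only point deserving attention is checking that Lemma~\ref{lem.factorizationSDR} really applies in the trivial-cofibration/fibration case on the same footing as the cofibration/trivial-fibration case; this is exactly what the closing lines of its proof assert, since there the lifting $\psi$ is produced by MC3 and the auxiliary map $\gamma$ is a fibration, trivial if and only if $\beta$ is a trivial fibration. Hence the substantive work has already been carried out, and what remains is purely formal bookkeeping.
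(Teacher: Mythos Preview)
Your proposal is correct and is exactly the argument the paper intends: its own proof is the single line ``Same proof, mutatis mutandis, of Proposition~\ref{prop.factorizationContr}.'' You have spelled out precisely that mutatis mutandis, invoking the $(CW,F)$-factorization in $\AR(R)$, the second case of Lemma~\ref{lem.factorizationSDR}, and the third basic trick for functoriality.
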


\begin{proof} Same proof, mutatis mutandis, of Proposition~\ref{prop.factorizationContr}.\end{proof}


\appendix
\bigskip
\section{Semifree extensions}
\label{sec.semifree}

The notion of semifree extension \cite[p. 835]{FHT95} extends the classical notion of semifree module and it 
is very useful in the study of general properties of cofibrations in the projective model structure. 
This appendix is written for reference purposes and contains results which are well known to experts and in any case easy to prove.

\begin{definition}\label{def.semifree}
A morphism $f\colon C\to P$ of cochain complexes over a unitary commutative ring $R$ is called a \textbf{semifree extension} if for every $i\in \Z$ there exists an increasing filtration   
\[ P^i_0\subset P^i_1\subset P^i_2\subset\cdots\]
of $P^i$ such that:
\begin{enumerate} 

\item every $P^i_n$ is an $R$-submodule of $P^i$, $\bigcup_{n\ge 0}P^i_n=P^i$ and $f\colon C^i\to P^i_0$ is an isomorphism;

\item there exists a direct sum decomposition $P_{n+1}^i=P_n^i\oplus A_n^i$, where $A_n^i$ is a free 
$R$-module,  and $d(A_n^i)\subset P^{i+1}_{n}$ for every
$n\ge 0$.

\end{enumerate}
\end{definition}

\begin{example} Let $f\colon C\to P$ be an injective  morphism of cochain complexes such that $f\colon C^i\to P^i$ is an isomorphism 
for every $i>0$ and $P^i/f(C^i)$ is free for every $i$. Then $f$ is a semifree extension. In fact we can consider the filtration 
\[ P^i_n=\begin{cases}P^i&\text{ if }i+n>0\\ f(C^i) &\text{ otherwise.}\end{cases}\]
\end{example}

\begin{theorem}\label{thm.semifreelifting} 
Every semifree extension has the left lifting property with respect to every surjective quasi-isomorphism.
\end{theorem}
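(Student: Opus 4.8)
The plan is to prove Theorem~\ref{thm.semifreelifting} by induction along the filtration defining the semifree extension, reducing at each stage to a lifting problem for a single free $R$-module. Suppose $f\colon C\to P$ is a semifree extension with filtration $\{P^i_n\}$ and free pieces $\{A^i_n\}$ as in Definition~\ref{def.semifree}, and suppose $p\colon X\to Y$ is a surjective quasi-isomorphism. Given a commutative square
\[\xymatrix{C\ar[r]^u\ar[d]_f&X\ar[d]^p\\ P\ar[r]_v&Y}\]
we must produce a lift $h\colon P\to X$ with $hf=u$ and $ph=v$. Write $P_n=\bigcup_i P^i_n$ for the $n$-th filtration stage (a subcomplex, since $d(A^i_n)\subset P^{i+1}_n$ guarantees each $P_n$ is closed under the differential). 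We construct compatible lifts $h_n\colon P_n\to X$ by induction on $n$: the base case $h_0$ is forced by the isomorphism $f\colon C\xrightarrow{\sim}P_0$, namely $h_0=u f^{-1}$, which satisfies $ph_0=p u f^{-1}=v f f^{-1}=v|_{P_0}$.

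For the inductive step, assume $h_n\colon P_n\to X$ is a morphism of complexes with $ph_n=v|_{P_n}$. Using the decomposition $P^i_{n+1}=P^i_n\oplus A^i_n$, it suffices to define $h_{n+1}$ on each free module $A^i_n$ compatibly with $d$ and $p$; since $A^i_n$ is free, we may define a graded $R$-linear map on basis elements. For a basis element $a\in A^i_n$ we have $da\in P^{i+1}_n$, so $h_n(da)\in X^{i+1}$ is already defined, and we seek $x\in X^i$ with $dx=h_n(da)$ and $p(x)=v(a)$. First note $p(h_n(da))=v(da)=d(v(a))$ and $d(h_n(da))=h_n(d^2a)=0$, so $h_n(da)$ is a cocycle mapping to the coboundary $d(v(a))$ under $p$. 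Choose any preimage $y\in X^{i+1}$ of $v(a)$ (possible since $p$ is surjective, working degreewise; here one must actually lift $v(a)$ together with the cocycle condition). More precisely: set $z=h_n(da)-dy$, a cocycle in $X^{i+1}$ with $p(z)=d v(a)-d v(a)=0$, i.e.\ $z$ lies in the kernel complex $Z=\ker p$. Since $p$ is a surjective quasi-isomorphism, the long exact sequence in cohomology forces $Z$ to be acyclic; as $z$ is a cocycle of $Z$ it is a coboundary, $z=dw$ for some $w\in Z^i$. Then $x:=y+w$ satisfies $dx=dy+z=h_n(da)$ and $p(x)=v(a)+0=v(a)$, as required. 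Extending $R$-linearly over the basis of $A^i_n$ and combining with $h_n$ on $P^i_n$ defines $h_{n+1}\colon P_{n+1}\to X$; one checks it is a chain map and that $ph_{n+1}=v|_{P_{n+1}}$ and $h_{n+1}f=u$. Finally $h=\colim_n h_n\colon P=\colim_n P_n\to X$ is the desired lift.

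The main obstacle is the inductive solvability of the one-step lifting problem, and this is exactly where both hypotheses on $p$ are used essentially: surjectivity of $p$ degreewise gives the preliminary preimage $y$, while the quasi-isomorphism property — equivalently, acyclicity of $\ker p$, which follows from the long exact cohomology sequence of $0\to\ker p\to X\to Y\to 0$ once one knows $\ker p$ has a surjection realized degreewise — is what lets us correct $y$ by a coboundary to match the cocycle $h_n(da)$. A secondary point requiring care is checking that the resulting $h_{n+1}$ is genuinely a morphism of complexes on all of $P_{n+1}$ and not merely on the new generators; this is routine since $d$ maps $A^i_n$ into $P^{i+1}_n$ where $h_n$ is already a chain map, so no compatibility is violated. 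The colimit step is harmless because filtered colimits of chain maps are computed degreewise and each $P^i$ equals $P^i_n$ for $n$ large.
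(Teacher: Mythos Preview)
Your proof is correct and follows the same inductive strategy along the filtration as the paper; the only cosmetic difference is that you first lift $v(a)$ by surjectivity and then correct via acyclicity of $\ker p$, whereas the paper first kills the cocycle $h_n(da_j)$ using the quasi-isomorphism and then corrects the image using surjectivity of a surjective quasi-isomorphism on cocycles. One small slip: it is not true in general that each $P^i$ equals $P^i_n$ for large $n$ (the filtration need not stabilize degreewise), but the colimit is still well-defined because every element of $P^i$ lies in some $P^i_n$ and the $h_n$ are compatible.
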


\begin{proof} As usual, for every cochain complex $C$ we shall denote by $Z(C),B(C)$ and $H(C)$ the graded modules of cocycles, coboundaries and cohomology of $C$.

Let $C \xra{f} P$ be a semifree extension,  $X \xra{g} Y$ a surjective quasi-isomorphism of cochain complexes, and consider a commutative diagram of solid arrows: 
\[\begin{matrix}\xymatrix{C \ar[r]^{\alpha} \ar[d]_f & X \ar[d]^g \\ P \ar[r]_{\beta} \ar@{.>}[ru]^{h} & Y }
\end{matrix}.\]
Let  $\{ P_n\}_{n \in \N}$ be an exhaustive  filtration of  subcomplexes of $P$  as in Definition~\ref{def.semifree}.
It is sufficient to define recursively a sequence of liftings 
\[\begin{matrix}\xymatrix{C \ar[r]^{\alpha} \ar[d]_f & X \ar[d]^g \\ P_n \ar[r]_{\beta} \ar[ru]^{h_n} & Y }
\end{matrix}\]
such that every $h_n$ extends $h_{n-1}$ and define $h$ as the colimit of $h_n$. Obviously 
$h_0=\alpha f^{-1}$; we may assume $n\ge 0$ and $h_{n}$ already defined.

For every integer $i$, there exists a subset $\{a_j\}_{j\in J^i_n} \subset P_{n+1}^i$ such that 
$da_j\in P_n^{i+1}$ and $P_{n+1}^i$ is the direct summand of $P_n^i$ and the free module generated 
by $\{a_j\}$. By linearity, in order to define $h_{n+1}$ which extends $h_n$ 
it is sufficient to define the elements $h_{n+1}(a_j)$ such that $dh_{n+1}(a_j)=h_n(da_j)$ and 
$gh_{n+1}(a_j)=\beta(a_j)$. Notice that:

 \begin{enumerate}
 \item $d(h_n(da_j))=h_n(d^2 a_j)=0$, and therefore $h_n(da_j) \in Z^{i+1}(X)$;
 
 \item $g(h_n(da_j))= \beta(da_j) = d (\beta(a_j))$, and therefore $g(h_n(da_j))$ is trivial in cohomology.
 \end{enumerate}
Since $g$ is a quasi-isomorphism, also $h_n(da_j)$ is trivial in cohomology and there exists 
$x_j \in X^i$ such that $d(x_j)=h_n(da_j)$. 
  
Moreover since $\beta(a_j)- g(x_j) \in Z^i(Y)$ and $f$ is a surjective quasi-isomorphism  there exists 
$y_j \in Z^i(X)$ such that $g(y_j)=\beta(a_j) - g(x_j)$. It is now sufficient to define 
$h_{n+1}(a_j)=x_j+y_j$. 
\end{proof}

\begin{theorem}\label{thm.semifreefactorization} 
Every morphism $\alpha \colon C\to D$ of cochain complexes of $R$-modules admits a factorization 
$C\xrightarrow{f}P\xrightarrow{g} D$, with  $f$ is a semifree extension and $g$  a surjective quasi-isomorphism.
\end{theorem}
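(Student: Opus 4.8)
The plan is to produce the factorization by an explicit, hands-on construction in the same spirit as Theorem~\ref{thm.semifreelifting}, rather than by invoking the small object argument. Concretely, I will build $P$ as the union (equivalently, the directed colimit) of an exhaustive filtration $C=P_0\subset P_1\subset P_2\subset\cdots$ of subcomplexes, where each $P_{n+1}$ is obtained from $P_n$ by freely adjoining a set of homogeneous generators whose differentials already lie in $P_n$, together with a compatible system of chain maps $g_n\colon P_n\to D$ extending $\alpha$; the map $g=\colim g_n$ will be the required surjective quasi-isomorphism, and $f\colon C=P_0\hookrightarrow P$ will be a semifree extension straight from the construction and Definition~\ref{def.semifree}. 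Since, by Theorem~\ref{thm.ModelcoCh}, a surjective quasi-isomorphism is precisely a trivial fibration, this gives the factorization of an arbitrary morphism as a cofibration followed by a trivial fibration in a fully explicit form.

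The adjunction of generators is organized into three phases. \emph{Phase A} (one stage): for every $i$ and every cocycle $z\in Z^i(D)$ adjoin a free generator $a_z$ of degree $i$ with $d a_z=0$ and $g_1(a_z)=z$; this is a legal semifree step ($d a_z=0\in P_0$) and it makes both $Z^\bullet(P_1)\to Z^\bullet(D)$ and $H^\bullet(P_1)\to H^\bullet(D)$ surjective. \emph{Phase B} (one stage): for every $i$ and every $d\in D^i$, note $d_D(d)\in Z^{i+1}(D)$, so by Phase A there is a cocycle $p_d\in Z^{i+1}(P_1)$ with $g_1(p_d)=d_D(d)$; adjoin a free generator $b_d$ of degree $i$ with $d b_d=p_d\in P_1^{i+1}$ and $g_2(b_d)=d$. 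This is again a legal semifree step and it makes $g_2$ degreewise surjective. \emph{Phase C} (countably many stages): iterate the passage from $(P_n,g_n)$ to $(P_{n+1},g_{n+1})$ for $n\ge 2$ --- for every $j$ and every cocycle $z\in Z^j(P_n)$ whose image $g_n(z)$ is a coboundary in $D$, choose $w_z\in D^{j-1}$ with $d w_z=g_n(z)$ and adjoin a free generator $c_z$ of degree $j-1$ with $d c_z=z\in P_n^j$ and $g_{n+1}(c_z)=w_z$. At every stage one checks at once that $g_{n+1}$ is a chain map extending $g_n$ (the only identities to verify are on the new generators) and that $P_{n+1}/P_n$ is a free graded module with differential into $P_n$, so the filtration $\{P_n\}$ witnesses that $f$ is a semifree extension.

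It then remains to see that $g\colon P\to D$ is a surjective quasi-isomorphism: degreewise surjectivity comes from Phase B, surjectivity on cohomology from Phase A (the class of $z\in Z^i(D)$ is the image of $[a_z]$), and for injectivity on cohomology one argues as follows. The subgroups $Z^j(P_n)\subseteq Z^j(P_{n+1})$ increase with $n$ and $P^j=\bigcup_n P_n^j$, so any cocycle $z\in Z^j(P)$ already lies in some $Z^j(P_n)$ with $n\ge 2$; if moreover $g(z)=g_n(z)\in B^j(D)$, then the Phase C step at stage $n$ adjoined a generator $c_z$ with $d c_z=z$, whence $[z]=0$ in $H^j(P)$. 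Thus $H(g)$ is injective and $g$ is a quasi-isomorphism. I expect this last point --- that the countable iteration of Phase C really annihilates, in the colimit, every spurious cohomology class of $P$ --- to be the only place needing a moment's thought; the operative facts are the monotonicity of the $Z^j(P_n)$ and that each cocycle of $P$ is supported at a finite stage. Everything else is routine bookkeeping, and the whole argument is simply the small object argument for the standard generating cofibrations of $\coCh(R)$ written out by hand, using that their domains are finitely generated so that $\omega$ stages suffice.
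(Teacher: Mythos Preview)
Your proposal is correct and follows essentially the same approach as the paper: both construct $P=\bigcup_n P_n$ in three phases---first adjoin free generators with zero differential hitting $Z(D)$, then adjoin generators to make $g$ degreewise surjective, then iterate adjoining generators to kill cocycles of $P_n$ that map to coboundaries in $D$---and verify injectivity on cohomology using that every cocycle of $P$ lies in some finite $P_n$. The only cosmetic difference is that you take the free module on the relevant set itself, while the paper allows an arbitrary free module surjecting onto it.
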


\begin{proof} We construct the factorization by taking an  
increasing sequence of cochain complexes $C=P_0 \subset P_1 \subset P_2\subset \cdots $ and a coherent sequence
of morphisms of cochain complexes $g_n\colon P_n\to D$: coherent means that $g_0=\alpha$ and every $g_n$
extends $g_{n-1}$. The complexes $P_n$ and the morphisms $g_n$ should satisfy the following conditions:   

\begin{itemize}
\item for every $i\in \Z$,  $P^i_{n+1}=P^i_n \oplus A^i_n$ where $A^i_n$ is a free $R$-module such that $d(A_n^i) \subset P^{i+1}_{n}$. This condition implies that the inclusion  $f \colon C \to P=\cup_n P_n$ is a semifree extension. 

\item $g_1\colon Z(P_1) \to Z(D)$ is surjective. This condition implies that $g=\colim g_n\colon P\to D$ is surjective in cohomology.

\item  $g_2\colon P_2 \to D$ is surjective. This condition implies that $g$ is surjective.

\item for every $n>2$, $(g_n)^{-1} (B(D)) \cap Z(P_n) \subset B(P_{n+1}) \cap P_n$. This condition implies that the kernel of $g_n\colon H(P_n)\to H(D)$ is contained in the kernel of $H(P_n)\to H(P_{n+1})$ and therefore that 
$g$ is injective in cohomology, since $Z(P)=\cup_n Z(P_n)$. 
\end{itemize}

The sequence $(P_n,g_n)$ can be constructed recursively in the following way:

\begin{itemize}
\item [$n=0:$] Take  $P_0=C$ and $g_0=\alpha$.

\item [$n=1:$] For every $i\in \Z$, let $A^i_0$ be a free $R$-module such that there exists a surjective map $\pi \colon A^i_0 \to Z^i(D)$. Then  define $P^i_1=P^i_0 \oplus A^n_0$, $g(p+a)=\alpha(p) + \pi(a)$ and 
$d(a)=0$ for every $a\in A^i_0$.

\item[$n=2:$] For every $i\in \Z$, 
let $A^i_1$ be a free $R$-module such that there exists a surjective map $\pi \colon A^i_1 \to D^i$.
If $\{a_j\}_{j\in J}$ is a basis of $A^i_1$, since $g_1\colon Z(P_1)\to Z(D)$ is surjective, there exists a subset 
$\{b_j\}\subset P_1^{i+1}$  such that $g_1(b_j)=d\pi(a_j)$. Then define $P_2^i=P_1^i\oplus A^i_1$, $d(a_j)=b_j$ and 
extend $g_1$ to the map $g_2\colon P_2^i\to D^i$ by setting $g_2(a_j)=\pi(a_j)$.

\item[$n>2:$] For every $i\in \Z$, 
let $A^i_{n-1}$ be a free $R$-module such that there exists a surjective morphism
$\delta \colon A^i_{n-1}\to g_{n-1}^{-1}(B^{i+1}(D))\cap Z^{i+1}(P)$. 
Then define $P^i_n=P^i_{n-1}\oplus A^i_{n-1}$, with differential $d(x+a)=d(x)+\delta(a)$, for 
$x\in P^i_{n-1}$ and $a\in A^i_{n-1}$. If $\{a_j\}_{j\in J}$ is a basis of $A^i_{n-1}$, there exists a subset 
$\{c_j\}\subset D^{i}$ such that $g_{n-1}\delta(a_j)=dc_j$. 
Then we can extend $g_{n-1}$ to a morphism $g_n\colon P_n\to D$ by setting $g_n(a_j)=b_j$. 
\end{itemize}
\end{proof}

\begin{corollary} In the model structure of cochain complexes where  weak equivalences and fibrations are respectively quasi-isomorphisms and surjective maps, a morphism $g\colon C\to D$ is a cofibration if and only if there exists a commutative diagram
\[ \xymatrix{&C\ar[dl]_g\ar[d]^f\ar[dr]^g&\\
D\ar[r]\ar@(dr,dl)[rr]_{\Id}&P\ar[r]&D}\]
with $f$ a semifree extension.
\end{corollary}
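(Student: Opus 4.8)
The plan is to prove the two implications separately, using Theorem~\ref{thm.semifreefactorization} and Theorem~\ref{thm.semifreelifting} together with the closure properties of cofibrations listed after Definition~\ref{def.ModelStructure}. For the ``if'' direction, suppose such a diagram exists with $f$ a semifree extension. By Theorem~\ref{thm.semifreelifting}, $f$ has the left lifting property with respect to every surjective quasi-isomorphism, that is, with respect to every trivial fibration; hence $f$ is a cofibration. The given diagram exhibits $g$ as a retract of $f$ in the category $\Map(\coCh(R))$ (the top and bottom rows compose to the identities $\Id_C$ and $\Id_D$), so by property MC2 the morphism $g$ is also a cofibration.

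For the ``only if'' direction, suppose $g\colon C\to D$ is a cofibration. By Theorem~\ref{thm.semifreefactorization} we may factor $g$ as $C\xrightarrow{f}P\xrightarrow{p}D$ with $f$ a semifree extension and $p$ a surjective quasi-isomorphism, i.e.\ a trivial fibration. Consider the lifting problem
\[\xymatrix{C\ar[r]^{f}\ar[d]_{g}&P\ar[d]^{p}\\ D\ar[r]_{\Id_D}\ar@{.>}[ru]^{r}&D}\]
Since $g$ is a cofibration and $p$ is a trivial fibration, property MC3 provides a morphism $r\colon D\to P$ with $rg=f$ and $pr=\Id_D$. Then the diagram
\[\xymatrix{&C\ar[dl]_{g}\ar[d]^{f}\ar[dr]^{g}&\\ D\ar[r]_{r}\ar@(dr,dl)[rr]_{\Id_D}&P\ar[r]_{p}&D}\]
commutes (the left square because $rg=f$, the right triangle because $pr=\Id_D$ gives $pf = prg = g$ on the nose from the factorization, and the bottom composite is $pr=\Id_D$), and it is precisely the asserted diagram.

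There is no real obstacle here: both directions are formal consequences of the two appendix theorems and the standard properties of model categories. The only point that requires a moment's care is checking that the square built from the retraction lift genuinely commutes in the required way and that the outer composite along the bottom is the identity — but this is immediate from $rg=f$, $pf=g$ (the defining property of the factorization), and $pr=\Id_D$. One should also note that the statement is really just the general model-categorical fact that cofibrations are exactly the retracts of the maps produced by the (cofibration, trivial fibration) factorization, specialized to the explicit semifree factorization of Theorem~\ref{thm.semifreefactorization}; phrasing it via the explicit diagram in the corollary is purely for later convenience.
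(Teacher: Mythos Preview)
Your proof is correct and is precisely the ``retract argument'' that the paper invokes (citing \cite[Lemma~1.1.9]{Hov99}) together with Theorems~\ref{thm.semifreelifting} and~\ref{thm.semifreefactorization}; you have simply unpacked what the paper leaves as a one-line reference.
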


\begin{proof} Immediate from the above theorems and the retract argument \cite[Lemma 1.1.9]{Hov99}.
\end{proof}

\end{document}